\documentclass[12pt, reqno]{amsart}
\usepackage[active]{srcltx}
\usepackage{ifthen}
\theoremstyle{plain}

\textwidth140mm \textheight223mm \hoffset-10mm \voffset-8mm

\usepackage[dvips]{graphicx}
\usepackage{color}
\definecolor{marin}{rgb}   {0.,   0.3,   0.7} 
\definecolor{rouge}{rgb}   {0.8,   0.,   0.} 
\definecolor{sepia}{rgb}   {0.8,   0.5,   0.} 
\usepackage[colorlinks,citecolor=marin,linkcolor=rouge,
            bookmarksopen,
            bookmarksnumbered
           ]{hyperref}


\newtheorem{thm}{Theorem}[section]
\newtheorem{cor}[thm]{Corollary}

\newtheorem{lem}[thm]{Lemma}
\newtheorem{prop}[thm]{Proposition}
\theoremstyle{definition} 
\newtheorem{defn}[thm]{Definition}

\numberwithin{equation}{subsection}

\newcommand{\D}[1]{\mbox{\rm #1}}

 \newcommand{\N}{\mathbb N}
 \newcommand{\R}{\mathbb R}

\newcommand{\cA}{\mathcal A}

  \newcommand{\cF}{\mathcal F}
  \newcommand{\cI}{\mathcal I}

 \newcommand{\cM}{\mathcal M}

 \newcommand{\cT}{\mathcal T}
 





\title[Convergence of solutions of the discrete discounted equation]
{Convergence of the solutions of the discounted equation: the discrete case}

\author[A. Davini, A. Fathi, R. Iturriaga, M. Zavidovique]
{Andrea Davini, Albert Fathi,\\ Renato Iturriaga, Maxime Zavidovique}
\address{Dip. di Matematica, {Sapienza} Universit\`a di Roma,
P.le Aldo Moro 2, 00185 Roma, Italy}
\email{davini@mat.uniroma1.it}
\address{UMPA, ENS-Lyon \& IUF, 46 all\'ee d'Italie, 69364 Lyon Cedex 7, France}
\email{albert.fathi@ens-lyon.fr}
\address{Cimat, Valenciana Guanajuato, M\'exico 36000}
\email{renato@cimat.mx}
\address{
IMJ-PRG (projet Analyse Alg\' ebrique), UPMC,  
4, place Jussieu, Case 247, 75252 Paris Cedex 5, France}
\email{zavidovique@math.jussieu.fr}
\numberwithin{equation}{section}
\thanks{Andrea Davini was partially supported by {CONACYT,} Mexico, research grant n.\,178838. \\
Albert Fathi, Renato Iturriaga and Maxime Zavidovique were supported by {ANR WKBHJ (ANR-12-BS01-0020)}}
\begin{document}
\maketitle
\begin{abstract}
We derive a discrete version of the results of \cite{Davini2016}. If $M$ is a compact metric space, $c : M\times M \to \R$ a continuous cost function and $\lambda \in (0,1)$, the unique solution to the discrete $\lambda$-discounted equation is the only function $u_\lambda : M\to \R$ such that
$$\forall x\in M, \quad u_\lambda(x) = \min_{y\in M} \lambda u_\lambda (y) + c(y,x).$$
We prove that there exists a unique constant $\alpha\in \R$ such that the family  of $u_\lambda+\alpha/(1-\lambda)$ is bounded as $\lambda \to 1$ and that for this $\alpha$, the family uniformly converges to a function $u_0 : M\to \R$ which then verifies
$$\forall x\in X, \quad u_0(x) = \min_{y\in X}u_0(y) + c(y,x)+\alpha.$$ 
The proofs make use of Discrete Weak KAM theory. We also characterize $u_0$ in terms of Peierls barrier and projected Mather measures.
\end{abstract}
\section{Introduction}

In \cite{Davini2016}, it was proven that the unique  viscosity solution of the $\lambda$-discounted Hamilton Jacobi equation converges, as $\lambda $ tends to zero,  to a particular solution of the critical Hamilton--Jacobi equation. In other words,  the limit selects one solution among the several possible choices.  In this work, we prove the discrete version of the same result. In this discrete setting, minimization of the action of curves is replaced  by minimization of  costs for  sequences, and the Hamilton--Jacobi equation by fixed points of the Lax--Oleinik semigroup.  This theory is  known as Discrete Aubry-Mather Theory. It was mainly developed  in \cite{BB} and \cite{Z}. 

Let $M$ be a compact metric space, and $c :M\times M \to \R$
a continuous function, that will be called cost function. 
The discrete version of  the  Hamilton--Jacobi equation $H(x,d_xu)=\alpha $ is  to find a $u\in C^0(M, \R)$ such that
\begin{equation}
\label{intro eqHJ}
u(x)= \cT (u)(x) +\alpha\qquad\hbox{for every $x\in M$},
\end{equation}
where $\cT$ is the Lax--Oleinik operator, defined on the set $C^0(M, \R)$ of continuous functions from $M$ to $\R$ as 
$$
\cT (g)(x) = \inf_{y\in M} g(y) + c(y,x) \qquad\hbox{for every $x\in M$ and $g\in C^0(M, \R)$}.
$$
Due to the compactness of $M$, there is only one constant $\alpha$ for which   we can find solutions of \eqref{intro eqHJ}.  This number  is called the critical value. As we will see, it has several characterizations. 

The discrete version of  the  $\lambda$-discounted Hamilton--Jacobi equation $\lambda u+ H(x,d_xu)=\alpha $ is  
\begin{equation}
\label{intro eqdisHJ}
u(x)= \cT_\lambda  (u)(x)+ \alpha\qquad\hbox{for every $x\in M$},
\end{equation}
where $\lambda$ is a parameter between 0 and 1, and $\cT_\lambda$ is an operator defined on $C^0(M, \R)$ as 
$$
\cT_\lambda  (g)(x) = \inf_{y\in M} \lambda g(y) + c(y,x) \qquad\hbox{for every $x\in M$ and $g\in C^0(M, \R)$}.
$$
Equation \eqref{intro eqdisHJ} admits a unique solution $u_\lambda$. Moreover, the family of solutions 
$(u_\lambda)_{0<\lambda<1}$ is equicontinuous and equibounded, see Proposition \ref{basicpropertiesdiscounted}. 
Clearly, any accumulation point of the $u_{\lambda}$, as $\lambda\to 1^-$, will be a solution of the  {discrete}   Hamilton--Jacobi equation.\footnote{In the continuous case we study the limit as $\lambda >0 $ tends to 0, and in the discrete case the limit  as $\lambda <1 $  tends to 1.}
Yet, equation  \eqref {intro eqHJ}  has several possible solutions, therefore it is not a priori clear whether the family $u_\lambda$ is fully convergent as $\lambda\to 1^-$.  The main Theorem of this work is to establish this convergence.

\begin{thm}
\label{maintheorem}
The solutions $u_\lambda$  of equation \eqref{intro eqdisHJ} converge, as $\lambda < 1$ tends to 1,  to a particular solution $u_0$ of equation  \eqref {intro eqHJ}.
\end{thm}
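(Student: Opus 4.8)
The plan is to mimic the continuous-case strategy of \cite{Davini2016}, transported to the discrete setting via Discrete Weak KAM theory. First I would normalize: write $\alpha$ for the critical value and replace $c$ by $c+\alpha$ so that $\alpha=0$, i.e. the critical equation reads $u=\cT(u)$. The discounted solution $u_\lambda$ then satisfies $u_\lambda=\cT_\lambda(u_\lambda)$, and by Proposition \ref{basicpropertiesdiscounted} the family $(u_\lambda)_{0<\lambda<1}$ is equibounded and equicontinuous, hence precompact in $C^0(M,\R)$ by Arzel\`a--Ascoli. Every accumulation point $u_0$ (along some sequence $\lambda_n\to 1^-$) is a fixed point of $\cT$, i.e. a solution of the critical equation. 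The whole content of the theorem is that there is only \emph{one} such accumulation point; equivalently, that the limit is independent of the chosen subsequence. Following \cite{Davini2016}, I expect this to be achieved by exhibiting an explicit candidate limit $u_0$ — characterized via the Peierls barrier and projected Mather measures, as announced in the abstract — and showing every accumulation point equals it.

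The key mechanism is a variational/duality representation of $u_\lambda$. Iterating the fixed-point equation $u_\lambda=\cT_\lambda(u_\lambda)$ gives, for each $x$,
$$
u_\lambda(x)=\inf\Big\{\sum_{k\ge 0}\lambda^k\, c(x_{k+1},x_k)\ :\ x_0=x,\ (x_k)_{k\ge 0}\subset M\Big\},
$$
the discounted analogue of the action functional; the $\lambda^k$ weights make the series converge since $c$ is bounded. Dually, one writes $u_\lambda$ as a supremum over "discounted subsolutions" (functions with $u\le \cT_\lambda(u)$, equivalently $\lambda u(y)+c(y,x)\ge u(x)$ for all $x,y$). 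The plan is then: (i) test the discounted equation against \emph{projected Mather measures}, i.e. probability measures $\mu$ on $M$ that are (after pushing forward by an optimal map) invariant and calibrated for the critical cost, to get a family of linear lower bounds on $u_\lambda$; (ii) take the supremum of these lower bounds and identify it — exactly as in the continuous case — as the correct candidate $u_0$, typically of the form $u_0(x)=\sup_{\mu}\big(\text{(something)}+\int h(y,x)\,d\mu(y)\big)$ with $h$ the Peierls barrier; (iii) prove $u_\lambda\ge u_0+o(1/(1-\lambda))\cdot 0$, more precisely $\liminf_{\lambda\to 1}u_\lambda\ge u_0$ pointwise (uniformly); (iv) prove the matching upper bound $\limsup_{\lambda\to 1} u_\lambda\le u_0$ using that any accumulation point is a critical solution dominated in the appropriate sense by $u_0$ — this is where the specific selection principle (the "$u_0$ is the maximal critical subsolution whose value is controlled on the Aubry set by the Mather measures" type statement) does the work.

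Concretely, the lower bound in step (iii) is the delicate half. Given an accumulation point $u_0=\lim_n u_{\lambda_n}$, one knows $u_0$ is a critical solution, so in particular $u_0\le \cT(u_0)$ is an equality and $u_0$ is calibrated along backward-optimal sequences; the issue is to show $u_0$ cannot be "too small", which amounts to comparing $u_0$ against the family of Mather-measure lower bounds valid for every $u_\lambda$ and passing to the limit. The passage to the limit requires uniform control of the discounted action "far in the past": one splits the discounted sum at a large time $N$, uses $\lambda^N\to$ something controlled, and invokes that the tail behaves like $(1-\lambda)^{-1}$ times an average of $c$ along near-optimal sequences, which by the ergodic/Mather structure is at least $(1-\lambda)^{-1}\alpha=0$ up to $o(1/(1-\lambda))$. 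The main obstacle, as in \cite{Davini2016}, is precisely making this ergodic-averaging-at-infinity argument rigorous in the discrete (and merely metric, non-smooth) setting — i.e. showing that near-minimizing sequences for $u_\lambda$, suitably weighted, converge to combinations of projected Mather measures — and then checking that the resulting candidate $u_0$ is genuinely subsequence-independent, i.e. that the lower bound from Mather measures and the upper bound from the critical-solution property pinch down to the same function. Once both inequalities $\limsup u_\lambda\le u_0\le\liminf u_\lambda$ are established for this canonical $u_0$, uniform convergence of the full family follows from equicontinuity, proving Theorem \ref{maintheorem}.
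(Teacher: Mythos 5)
Your plan follows essentially the same route as the paper: normalize $\alpha=0$, use equicontinuity and Arzel\`a--Ascoli, take as candidate $u_0=\sup\{w \text{ subsolution}\,:\,\int_M w\,d\mu\le 0 \ \text{for all } \mu\in\cM_0\}$, obtain the upper bound on any accumulation point by integrating $u_\lambda(x)-\lambda u_\lambda(y)\le c(y,x)$ against a Mather measure, and obtain the lower bound via the discounted occupation measures $(1-\lambda)\sum_{n\ge 0}\lambda^n\delta_{(x^\lambda_{-n-1},x^\lambda_{-n})}$ along minimizing backward sequences. The only substantive comment is that what you flag as ``the main obstacle'' (the ergodic-averaging-at-infinity step) is resolved in the paper by an elementary telescoping computation: these occupation measures are asymptotically closed and have total cost $(1-\lambda)u_\lambda(x_0)\to 0$, hence accumulate on Mather measures, and the inequality $u_\lambda(x_0)\ge w(x_0)-\int_M w\,d\mu^\lambda$ for every continuous subsolution $w$ (extended to discontinuous ones by replacing $w$ with $\cT(w)$) then passes to the limit to give $u\ge u_0$; also note that the Peierls-barrier characterization is $u_0(x)=\min_{\mu\in\cM_0}\int_M h(y,x)\,d\mu(y)$, a minimum rather than a supremum, though this is not needed for the convergence statement itself.
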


Stephane Gaubert pointed out that the result was already known when $M$ is finite. For example, it could be deduced from
\cite{Kohlberg}.

The definitions involved in the following paragraphs, subsolutions, projected Mather measures and Peierls barrier, will be given in the appendix.

Subsolutions of \eqref {intro eqHJ} do not need to be continuous, however, according to Proposition \ref{integrability}, all subsolutions of \eqref {intro eqHJ} are integrable with respect to any projected Mather measure  {(see Definition \ref{defMather})}.   We denote by $\cF_- $ the set of subsolutions  such that
$$ \int_M u(x)\,d\mu(x)  \le 0$$
for all projected Mather measures. We have the following characterizations for the solution selected in the limit.

\begin{prop}
\label{main.prop} 
The limit solution  $u_0$  in Theorem \ref{maintheorem} can be  characterized  in either of the following  two  ways:\\ 
\begin{enumerate}
\item $\qquad\qquad u_0(x)= \sup\limits_{u\epsilon \cF_-} \, u(x),$\medskip\\
\item 
$\qquad\qquad\displaystyle u_0(x) =\min\limits_{\mu}\int_M h(y,x)\,d\mu(y),$\medskip\\
where $h$ is the Peierls barrier and $\mu $ varies in the set of projected Mather measures.
\end{enumerate}
  \end{prop}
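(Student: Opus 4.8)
My strategy is to mimic the structure of the continuous-case argument in \cite{Davini2016}, exploiting the discrete weak KAM machinery set up in the appendix. First, normalize by the critical value so that we may assume $\alpha=0$; then $u_\lambda$ is the fixed point of $\cT_\lambda$ and, by Proposition \ref{basicpropertiesdiscounted}, the family is equibounded and equicontinuous, so by Ascoli it suffices to identify \emph{every} accumulation point $u_*$ of $(u_\lambda)$ with the function defined by either (1) or (2); uniqueness of the accumulation point then yields full convergence (this is essentially the content of Theorem \ref{maintheorem}, which I am allowed to use, so really the task is to pin down the limit). The plan is to prove the two inequalities
$$
\sup_{u\in\cF_-} u(x)\ \le\ u_*(x)\ \le\ \min_{\mu}\int_M h(y,x)\,d\mu(y)
$$
and then close the loop by showing $\min_{\mu}\int_M h(y,x)\,d\mu(y)\le \sup_{u\in\cF_-}u(x)$, which forces all three quantities to coincide and, in particular, shows the accumulation point is unique.

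For the lower bound, fix $u\in\cF_-$, i.e. a subsolution of \eqref{intro eqHJ} with $\int_M u\,d\mu\le 0$ for every projected Mather measure $\mu$. The key computation is to test $u$ against the discounted equation: since $u(x)\le u(y)+c(y,x)$ for all $y$, one shows by a telescoping/iteration argument along near-minimizing sequences for $u_\lambda$ that $u(x)-u_\lambda(x)\le \lambda^n\big(u(x_n)-u_\lambda(x_n)\big) + (\text{error})$, where the error is controlled by the ``discrepancy'' $\int (1-\lambda)\sum \lambda^k u(x_k)$ along the orbit; passing to the limit and using that limiting empirical measures of minimizing configurations are projected Mather measures (this is exactly where Definition \ref{defMather} and the calibration properties from the appendix enter), the hypothesis $\int u\,d\mu\le 0$ makes the error term nonpositive in the limit, giving $u(x)\le u_*(x)$. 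For the upper bound one runs the dual argument: $u_\lambda(x)=\min_y \lambda u_\lambda(y)+c(y,x)$ unrolls into $u_\lambda(x)=\inf (1-\lambda)\sum_{k\ge 0}\lambda^k\big[\text{something}\big]+\dots$ over backward configurations ending at $x$; choosing the configuration to spend a long time near the Aubry set and using the definition of the Peierls barrier $h$ and of projected Mather measures as limits of occupation measures, one extracts, for each $\mu$, a comparison $u_*(x)\le \int_M h(y,x)\,d\mu(y)$, hence $u_*(x)\le \min_\mu \int h(y,x)\,d\mu(y)$.

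Finally, to close the chain I must check $\min_\mu\int_M h(y,x)\,d\mu(y)\le \sup_{u\in\cF_-}u(x)$. For this it is enough to exhibit, for each fixed $x$, a function in $\cF_-$ whose value at $x$ equals $\min_\mu\int h(y,x)\,d\mu(y)$: the natural candidate is $u(\cdot)=\int_M h(y,\cdot)\,d\mu_0(y) - \text{(const)}$ for a minimizing measure $\mu_0$, using that $y\mapsto h(y,\cdot)$ and its averages are subsolutions of \eqref{intro eqHJ} (a standard property of the Peierls barrier, to be recalled in the appendix) and that the normalizing constant can be chosen so that the $\cF_-$ constraint $\int u\,d\mu\le 0$ holds for all $\mu$ — this uses the symmetry-type identity $\int\!\!\int h(y,z)\,d\mu(y)\,d\mu'(z)\ge 0$ relating Peierls barrier and Mather measures, together with the fact that $h(y,y)=0$ on the projected Aubry set. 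The main obstacle I anticipate is precisely this last normalization step and, more generally, the careful bookkeeping of the ``$(1-\lambda)\sum\lambda^k$'' weighted Abel sums: one must show that the limiting occupation measures of $u_\lambda$-minimizing configurations are genuinely \emph{projected Mather measures} and not merely holonomic/closed measures, which is the discrete analogue of the delicate measure-theoretic argument in \cite{Davini2016} and is where almost all the real work lies; the two characterizations (1) and (2) then drop out of the squeeze above.
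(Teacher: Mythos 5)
Your overall squeeze $\sup_{u\in\cF_-}u\le u_*\le \min_\mu\int h(y,\cdot)\,d\mu(y)\le\sup_{u\in\cF_-}u$ is a legitimate architecture, and your first inequality follows the paper's route (discounted occupation measures along minimizing configurations, their limits being Mather measures, and the telescoping estimate of Lemma \ref{KeyLemma} and Proposition \ref{keycor}); note only that for discontinuous $w\in\cF_-$ one must replace $w$ by $\cT(w)$ and use Propositions \ref{continuitysol} and \ref{supportcontained} to make the integration legitimate. Your middle inequality is also obtainable much more cheaply than by your proposed ``spend a long time near the Aubry set'' construction: the paper simply integrates the subsolution inequality $u(x)-u(z)\le h(z,x)$ of Proposition \ref{subpeierls} against $\mu$ and uses $\int u\,d\mu\le 0$.

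The genuine gap is in your closing step. You propose $u(\cdot)=\int h(y,\cdot)\,d\mu_0(y)-c$ with $\mu_0$ a minimizing measure at the fixed point $x$, and you need simultaneously $c=0$ (so that $u(x)=\hat u_0(x)$) and $c\ge\sup_\nu\int\!\!\int h(y,z)\,d\mu_0(y)\,d\nu(z)$ (so that $u\in\cF_-$). The ``symmetry-type identity'' you invoke goes the wrong way: by the triangle inequality and $h(w,w)\ge 0$ one has $\int\!\!\int\big(h(y,z)+h(z,y)\big)\,d\mu_0(y)\,d\nu(z)\ge 0$, so $\int\!\!\int h(y,z)\,d\mu_0(y)\,d\nu(z)$ is in general \emph{strictly positive} (e.g.\ when the Aubry set splits into several static classes at positive mutual barrier), which forces $c>0$ and yields only $u(x)=\hat u_0(x)-c<\hat u_0(x)$. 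The paper's resolution is different in two essential ways: first, since $u_0$ is a solution and $\hat u_0$ a subsolution, the maximum principle (Proposition \ref{Maxprinciple}) reduces the inequality $u_0\ge\hat u_0$ to the projected Aubry set $\cA$; second, at a point $y\in\cA$ one uses the \emph{backward} barrier $w(\cdot)=-h(\cdot,y)+\hat u_0(y)$, which lies in $\cF_-$ precisely because $\hat u_0(y)=\min_{\mu'}\int h(z,y)\,d\mu'(z)\le\int h(x,y)\,d\mu(x)$ for every projected Mather measure $\mu$, and which satisfies $w(y)=\hat u_0(y)$ since $h(y,y)=0$ on $\cA$. Without this change of candidate and the reduction to $\cA$, your chain does not close.
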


 For convenience of the reader, we will also state, in the appendix,   the results we  use  from  Discrete Aubry--Mather Theory, see  \cite{BB} or \cite{Z} where proofs can be found. The non-expert reader should probably first look at the appendix.\\  
 
\smallskip\indent{\textsc{Acknowledgement. $-$}}
Part of this work was done while the first (AD) and second (AF) authors were visiting CIMAT  in Guanajuato, that they both wish to thank for its hospitality.  The final version was done while the second author was visiting DPMMS, University of Cambridge. The authors thank the anonymous referee for useful comments that helped improve the presentation of the present paper.

\section{Preliminaries}
  
In this section we shall state and prove some preliminary facts about the discounted equation 
\begin{equation}\label{beta eqdisHJ}
u(x)= \cT_\lambda  (u)(x)+ \beta=\inf_{z\in M}\lambda u(z)+c(z,x)+\beta,
\end{equation}
for every $x\in M$, where $\beta$ is a fixed real constant. 

Let $u$ be a continuous function on $M$. We will say that $u$ is a {\em subsolution} of \eqref{beta eqdisHJ} if \ $u(x)\leq \cT_\lambda(u)(x)+\beta$ for every $x\in M$. We will say that $u$ is a {\em supersolution} of \eqref{beta eqdisHJ} if \ $u(x)\geq \cT_\lambda(u)(x)+\beta$ for every $x\in M$.  

In the sequel, we shall denote by  $S_{n} (x)$ the set of $M$-valued sequences of the form $(x_{-n},x_{-n+1},\dots,x_{-1},x_0)$ with $x_0=x$, and by 
$S_{\infty} (x)$ the set of $M$-valued sequences $\bar x =(x_{-n})_{n\geq 0}$ such that $x_0= x$. 
It will also be convenient to set $S_{n} (M)=\cup_{x\in M}S_{n} (x)$, and $S_{\infty} (M)=\cup_{x\in M}S_{\infty} (x)$.

One of our main tools is the following comparison principle:

\begin{prop}\label{prop comparison}
Let $v$ and $w$ be a pair of continuous functions on $M$ that are, respectively, a sub and a supersolution of \eqref{beta eqdisHJ}. Then 
\begin{eqnarray}\label{claim comparison}
v(x)\leq \min_{\bar x \in S_\infty(x)} \sum_{n=0}^{\infty}\lambda ^{n} \big(c(x_{-n-1},x_{-n} )+\beta\big) \leq w(x)
\end{eqnarray}
for every $x\in M$.
\end{prop}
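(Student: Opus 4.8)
The plan is to prove the two inequalities in \eqref{claim comparison} separately, by iterating the sub/supersolution inequalities along finite sequences and then passing to the limit. The key observation is that the infimum defining $\cT_\lambda$ can be unfolded: for a subsolution $v$ we have $v(x_0)\le \lambda v(x_{-1})+c(x_{-1},x_0)+\beta$ for \emph{every} choice of $x_{-1}\in M$, and more generally $v(x_{-k})\le \lambda v(x_{-k-1})+c(x_{-k-1},x_{-k})+\beta$. Multiplying the $k$-th such inequality by $\lambda^k$ and summing from $k=0$ to $k=n-1$ yields, after telescoping the $\lambda^k v(x_{-k})$ terms,
\begin{equation*}
v(x_0)\le \lambda^n v(x_{-n}) + \sum_{k=0}^{n-1}\lambda^k\big(c(x_{-k-1},x_{-k})+\beta\big),
\end{equation*}
valid for every finite sequence $(x_{-n},\dots,x_0)\in S_n(x)$.

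First I would carry out this telescoping computation carefully for $v$. Since $M$ is compact and $c$ is continuous, $v$ is bounded, so $\lambda^n v(x_{-n})\to 0$ as $n\to\infty$ (using $0<\lambda<1$); moreover $c+\beta$ is bounded, so the series $\sum_{k\ge0}\lambda^k(c(x_{-k-1},x_{-k})+\beta)$ converges absolutely. Taking $n\to\infty$ in the displayed inequality gives $v(x)\le \sum_{n=0}^\infty \lambda^n(c(x_{-n-1},x_{-n})+\beta)$ for every $\bar x\in S_\infty(x)$, and taking the infimum over $\bar x\in S_\infty(x)$ yields the left-hand inequality of \eqref{claim comparison} (with $\inf$; that the $\inf$ is attained, i.e.\ is a $\min$, should follow from a compactness/diagonal argument, or one can first prove the statement with $\inf$ and upgrade afterwards — I would note this but it is not essential to the comparison principle).

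For the right-hand inequality, the supersolution $w$ satisfies $w(x)\ge \cT_\lambda(w)(x)+\beta = \inf_{z}\big(\lambda w(z)+c(z,x)\big)+\beta$. Here the infimum is \emph{attained} because $M$ is compact and the map $z\mapsto \lambda w(z)+c(z,x)$ is continuous; so there is a point $z=x_{-1}(x)$ realizing it, giving $w(x)\ge \lambda w(x_{-1})+c(x_{-1},x)+\beta$ with equality-type control rather than a free choice. Iterating this backward (choosing $x_{-k-1}$ to be a minimizer for $w$ at $x_{-k}$) produces a specific sequence $\bar x\in S_\infty(x)$ along which $w(x)\ge \lambda^n w(x_{-n}) + \sum_{k=0}^{n-1}\lambda^k(c(x_{-k-1},x_{-k})+\beta)$; letting $n\to\infty$ as before gives $w(x)\ge \sum_{n=0}^\infty\lambda^n(c(x_{-n-1},x_{-n})+\beta)\ge \min_{\bar x\in S_\infty(x)}(\cdots)$, which is the desired bound.

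The main obstacle, such as it is, is the interchange of limits and the attainment of the minimum in the middle term of \eqref{claim comparison}: one must justify that the infimum over the non-compact-looking set $S_\infty(x)$ of infinite sequences is actually a minimum. I expect this to follow from the fact that $M^{\N}$ with the product topology is compact (Tychonoff), the functional $\bar x\mapsto \sum_n\lambda^n(c(x_{-n-1},x_{-n})+\beta)$ is continuous for this topology (uniform convergence of the tail is controlled by $\lambda^n\|c+\beta\|_\infty$), and $S_\infty(x)$ is closed; alternatively one extracts a convergent subsequence from near-minimizers of the finite problems and passes to the limit. Everything else is the bookkeeping of the telescoping sum and the elementary estimate $\lambda^n\to0$, which are routine.
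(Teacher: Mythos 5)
Your proof is correct and follows essentially the same route as the paper's: unfold/iterate the sub- and supersolution inequalities to get the finite-horizon bound $\lambda^n v(x_{-n})+\sum_{k=0}^{n-1}\lambda^k(c(x_{-k-1},x_{-k})+\beta)$, let $n\to\infty$ using $\lambda<1$ and boundedness, and justify that the infimum over $S_\infty(x)$ is attained by compactness of $S_\infty(x)$ and continuity (uniform convergence) of the series. The only cosmetic difference is that you telescope a pointwise inequality and treat the supersolution side explicitly via minimizing choices, where the paper iterates the operator and says the $w$-side is analogous.
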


\begin{proof}
Replacing the cost function $c(x,y)$ with $c(x,y)+\beta$, we can always assume that $\beta=0$. 
Let us pick a point $x\in M$. By the definition of $\cT_\lambda$ and the fact that $v\leq \cT_\lambda(v)$ on $M$ we get 
\begin{eqnarray*}
v(x)&\leq& \min_{x_{-1}}\big(\lambda v(x_{-1})+c(x_{-1},x)\big) \leq \min_{x_{-1}}\big(\lambda \cT_\lambda(v)(x_{-1})+c(x_{-1},x)\big)\\
&=&\min_{x_{-2},x_{-1}}\big(\lambda^2 v(x_{-2})+\lambda c(x_{-2},x_{-1})+c(x_{-1},x)\big).
\end{eqnarray*}
Arguing inductively, we derive 
\[
v(x)\leq \min_{\bar x\in S_\infty(x)}\Big(\lambda^{n}v(x_{-n})+\sum_{k=0}^{n-1} \lambda^k c(x_{-k-1},x_{-k}) \Big).
\]
Since $v,c$ are continuous functions defined on compact spaces, and $\lambda<1$, the sequence of continuous 
functions $\lambda^{n}v(x_{-n})+\sum\limits_{k=0}^{n-1} \lambda^k c(x_{-k-1},x_{-k})$ converges uniformly to $\sum\limits_{k=0}^{\infty} \lambda^k c(x_{-k-1},x_{-k})$ on the compact space $S_\infty(x)$. Therefore, the left hand side 
inequality in \eqref{claim comparison} holds. The inequality for $w$ follows arguing analogously.
\end{proof}

The existence of a (unique) solution of equation \eqref{beta eqdisHJ} is established in the next proposition. 

\begin{prop}
For $0< \lambda <1 $
there is only one solution $u_{\lambda}$ of   the discounted equation \eqref{beta eqdisHJ}  and it can be represented by  
\begin{equation}
\label{solutionulambda}
u_\lambda (x_0)= \min_{\bar x \in S_\infty(x_0)} \sum_{n=0}^{\infty}\lambda ^{n} \big(c(x_{-n-1},x_{-n} )+\beta\big)\quad\hbox{for every $x_0\in M$}. 
\end{equation}

 \end{prop}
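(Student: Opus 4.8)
The plan is to prove existence, uniqueness, and the explicit formula \eqref{solutionulambda} in one stroke, leveraging Proposition~\ref{prop comparison}. As in that proof, replacing $c$ by $c+\beta$ reduces to $\beta=0$. Define
\[
U(x_0):=\min_{\bar x\in S_\infty(x_0)}\sum_{n=0}^{\infty}\lambda^{n}c(x_{-n-1},x_{-n}).
\]
First I would check this is well posed: for each $\bar x\in S_\infty(x_0)$ the series converges absolutely since $|c|\le \|c\|_\infty$ and $\lambda<1$, the partial sums converge uniformly on the compact product space $S_\infty(x_0)$ (with the product topology, compact by Tychonoff), hence the infinite sum is a continuous function on $S_\infty(x_0)$ and attains its minimum; so $U$ is well defined and finite, with $|U|\le \|c\|_\infty/(1-\lambda)$.

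The core step is to show $U$ is itself a solution, i.e.\ $U=\cT_\lambda(U)$ on $M$. For the inequality $U\ge \cT_\lambda(U)$: given $x_0$, pick a minimizing sequence $\bar x$ for $U(x_0)$; then the shifted sequence $(x_{-n-1})_{n\ge 0}\in S_\infty(x_{-1})$ is a competitor for $U(x_{-1})$, so
\[
U(x_0)=c(x_{-1},x_0)+\lambda\sum_{n=0}^{\infty}\lambda^{n}c(x_{-n-2},x_{-n-1})\ge c(x_{-1},x_0)+\lambda U(x_{-1})\ge \cT_\lambda(U)(x_0).
\]
For the reverse inequality $U\le \cT_\lambda(U)$: fix $x_0$ and any $y\in M$; choose a minimizing sequence $\bar y\in S_\infty(y)$ for $U(y)$ and prepend $x_0$ to form $\bar x=(\dots,y,x_0)\in S_\infty(x_0)$; then $U(x_0)\le c(y,x_0)+\lambda U(y)$, and taking the infimum over $y$ gives $U(x_0)\le \cT_\lambda(U)(x_0)$. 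Thus $U$ solves \eqref{beta eqdisHJ}, establishing existence.

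Uniqueness and the representation formula then follow immediately from Proposition~\ref{prop comparison}: any solution $u_\lambda$ is simultaneously a subsolution and a supersolution, so applying \eqref{claim comparison} with $v=w=u_\lambda$ forces
\[
u_\lambda(x)=\min_{\bar x\in S_\infty(x)}\sum_{n=0}^{\infty}\lambda^{n}\big(c(x_{-n-1},x_{-n})+\beta\big)
\]
for every $x\in M$, which is precisely \eqref{solutionulambda}; in particular there is at most one solution, and combined with the existence of $U$ we are done. I expect the only mildly delicate point to be the well-posedness of $U$ — specifically, confirming that $S_\infty(x_0)$ is compact and that the tail estimate $\sum_{n\ge N}\lambda^n\|c\|_\infty\to 0$ gives uniform convergence of the partial sums, so that the minimum is attained and $U$ inherits enough regularity; everything else is a routine manipulation of the defining infimum.
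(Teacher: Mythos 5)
Your argument is correct, but the existence part takes a different route from the paper. The paper establishes existence and uniqueness in one stroke by observing that $\cT_\lambda$ is a $\lambda$-contraction for the $C^0$-norm and invoking the Banach fixed point theorem; the representation formula is then read off from Proposition~\ref{prop comparison} (or, alternatively, as the limit of the iterates $\cT_\lambda^n(0)$). You instead define the candidate $U$ directly by the right-hand side of \eqref{solutionulambda} and verify the fixed point equation $U=\cT_\lambda(U)$ by a dynamic programming argument (shifting a minimizing sequence for one inequality, prepending a point for the other), reserving Proposition~\ref{prop comparison} for uniqueness and the identification of any solution with the formula --- which is exactly how the paper uses it too. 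Your approach is more constructive and makes the representation formula primary, at the cost of the well-posedness discussion (compactness of $S_\infty(x_0)$, uniform convergence of partial sums); note, though, that the paper already needs that same compactness argument inside the proof of Proposition~\ref{prop comparison}, so nothing extra is really being paid. The contraction argument is shorter and yields uniqueness without any comparison principle. One small point you gloss over: to call $U$ a solution in the sense of the paper you should note that it is continuous; this is immediate either from $U=\cT_\lambda(U)$ together with the fact that $\cT_\lambda$ maps bounded functions to functions with the modulus of continuity of $c$, or directly by comparing minimizing sequences for $U(x)$ and $U(y)$ that differ only in their terminal point.
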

 
 \begin{proof}
As before,  to simplify notations, replacing the cost function $c(x,y)$ with $c(x,y)+\beta$, we will assume $\beta=0$. 
 For $\lambda $ strictly smaller than 1, the operator $u\mapsto \cT_\lambda $ is a contraction in the space of continuous functions with the $C^0$--norm. Indeed, let $f$ and $g$ be two continuous functions. For a given $x$ in $M$, let $y$ such that $\cT_\lambda f(x) = \lambda f(y) + c(y,x)$. By definition we have $\cT_\lambda g(x) \le \lambda g(y) + c(y,x)$, so 
 
 $$\cT_\lambda g(x)- \cT_\lambda f(x)
  \le \lambda (g(y)-f(y))  \le \lambda \|f-g\|_0,$$
 where $ \| \cdot  \|_0 $ denotes  the $C^0$ norm.
  Reversing the roles of $f$ and $g$  we obtain
  $$|\cT_\lambda g(x)- \cT_\lambda f(x)|
    \le \lambda \|f-g\|_0.$$
  Since this is true for every $x$, we obtain 
  $$\|  \cT_\lambda f- \cT_\lambda g \|_0 \le \lambda \|f-g\|_0 .$$
 Therefore,  from the Banach fixed point theorem, we obtain  that there is a unique fixed point $u_\lambda$. The representation formula \eqref{solutionulambda}   is a direct consequence of Proposition \ref{prop comparison}. Alternatively, since iterates of the contraction map converge to the fixed point,   we can obtain the same formula   as the limit, as $n$ tends to  infinity, of $\cT_\lambda^n (0)$.
\end{proof}

Some crucial properties of the solutions of the discounted equation are established in the next proposition. It incidentally entails a characterization for the critical value $\alpha$  {(see Theorem \ref{weakdiscretekam})}.  

\begin{prop}\label{basicpropertiesdiscounted}
For every $\beta\in\R$, the family $\{u^\beta_\lambda\,:\, 0<\lambda <1\,\}$ of solutions of \eqref{beta eqdisHJ}  is equicontinuous. Furthermore, it is equibounded if and only if $\beta$ is equal to the critical value $\alpha$. 
\end{prop}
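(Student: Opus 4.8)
The plan is to treat the two claims separately, reducing both to the comparison principle (Proposition \ref{prop comparison}) and the explicit formula \eqref{solutionulambda}. First, for equicontinuity: fix $\beta$ and write $u_\lambda = u_\lambda^\beta$ via \eqref{solutionulambda}. Given $x, x' \in M$, pick a minimizing sequence $\bar x = (x_{-n})_{n\ge 0} \in S_\infty(x)$ for $u_\lambda(x)$, and build a competitor in $S_\infty(x')$ by replacing only the last point $x_0 = x$ with $x' $, keeping $x_{-1}, x_{-2}, \dots$ the same. Then only the $n=0$ term of the sum changes, giving
\[
u_\lambda(x') - u_\lambda(x) \le c(x_{-1}, x') - c(x_{-1}, x) \le \omega_c(d(x,x')),
\]
where $\omega_c$ is a modulus of continuity for $c$ (which exists since $c$ is continuous on the compact space $M\times M$, hence uniformly continuous). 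Swapping $x$ and $x'$ gives the reverse bound, so every $u_\lambda$ has the same modulus of continuity $\omega_c$, independent of $\lambda$; this is equicontinuity.

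Next, the equiboundedness claim. Suppose first $\beta = \alpha$. Let $u$ be a solution of the critical equation \eqref{intro eqHJ}, i.e. $u = \cT(u) + \alpha$, which exists by definition of the critical value. I claim $u$ is both a sub- and a supersolution of \eqref{beta eqdisHJ} with $\beta = \alpha$ up to an additive shift argument: more precisely, since $u(x) = \min_y u(y) + c(y,x) + \alpha$, for any constant $k$ the function $u+k$ satisfies $\cT_\lambda(u+k)(x) + \alpha = \min_y \lambda(u(y)+k) + c(y,x) + \alpha$. Choosing $k$ large positive makes $\lambda(u(y)+k) \le u(y)+k$ fail in the wrong direction, so instead I compare directly: because $u$ is bounded, say $\|u\|_0 \le R$, one checks that $u - R$ is a subsolution and $u + R$ is a supersolution of \eqref{beta eqdisHJ} with $\beta=\alpha$ for every $\lambda \in (0,1)$ (using $\lambda u(y) \le u(y) + (1-\lambda)R \le u(y) + 2R$ and the analogous lower bound). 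Applying Proposition \ref{prop comparison} to these sub/supersolutions and to $u_\lambda$ sandwiches $u_\lambda$ between two $\lambda$-independent bounds, i.e. $\|u_\lambda^\alpha\|_0 \le 3R$ uniformly.

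For the converse, suppose $\beta \ne \alpha$ and argue by contradiction that $\{u_\lambda^\beta\}$ is equibounded. Combined with equicontinuity and Ascoli–Arzelà, some subsequence $u_{\lambda_j}^\beta$ converges uniformly to a continuous $v$ as $\lambda_j \to 1^-$; passing to the limit in $u_{\lambda_j}^\beta(x) = \inf_z \lambda_j u_{\lambda_j}^\beta(z) + c(z,x) + \beta$ (the convergence being uniform and $M$ compact, so the $\inf$ passes to the limit) yields $v = \cT(v) + \beta$, making $\beta$ a critical value. By uniqueness of the critical value this forces $\beta = \alpha$, a contradiction. I expect the main obstacle to be the bookkeeping in the equiboundedness-at-$\alpha$ direction: one must verify carefully that the shifted critical solution really gives $\lambda$-independent sub/supersolutions of \eqref{beta eqdisHJ}, handling the $(1-\lambda)$ factors so that the estimates do not degenerate as $\lambda \to 1$; once that is set up, the comparison principle does the rest.
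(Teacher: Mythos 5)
Your proof is correct. The equicontinuity argument and the equiboundedness at $\beta=\alpha$ are essentially the paper's: you perturb a minimizing sequence in its endpoint (the paper uses the one-step infimum directly, which is the same computation), and you sandwich $u^\alpha_\lambda$ between the nonpositive shift $u-R$ and the nonnegative shift $u+R$ of a critical solution, exactly the paper's $\underline u$, $\overline u$ with $k=R$; the intermediate sentence about $k$ large positive making the inequality ``fail in the wrong direction'' is confused (it actually makes it hold), but you then do the right computation, so no harm is done. Where you genuinely diverge is the \emph{only if} direction: you argue by contradiction via Ascol\`a--Arzel\`a, pass to the limit in the discounted equation, and invoke the uniqueness of the critical constant (Theorem \ref{weakdiscretekam}). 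The paper instead observes the exact identity $u^\beta_\lambda=u^\alpha_\lambda-\frac{\alpha-\beta}{1-\lambda}$, which follows in one line from uniqueness of the fixed point of the contraction $\cT_\lambda+\beta$. Both are valid; the paper's identity is more elementary (it does not need the Weak KAM theorem's uniqueness of $\alpha$) and is quantitative, showing that $u^\beta_\lambda$ blows up at the precise rate $(\alpha-\beta)/(1-\lambda)$, whereas your compactness argument only yields the qualitative contradiction. Your route does have the small virtue of making explicit why any limit of an equibounded family must solve the critical equation, a fact used implicitly elsewhere in the paper.
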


\begin{proof} 
For the first part, let $x$   and $y$ be two points in $M$ and $z$ a point realizing the infimum 
of the discounted Hamilton--Jacobi equation \eqref{beta eqdisHJ} for the point $x$.  We obtain
 $$u^\beta_\lambda(y)-u^\beta_\lambda(x) \le c(z,y)- c(z,x),$$
so the solutions $u^\beta_{\lambda}$ have all the same modulus of continuity as the cost function $c$.

Let us prove that they are equibounded when $\beta$ equals the critical constant $\alpha$. Take a solution $u$ of 
\begin{equation}\label{eq critica}
u= \cT (u) +\alpha.
\end{equation}
Since $u$ is continuous and $M$ is compact, we can find a constant $k$ such that ${\overline u}(x):=u(x)+k$ and ${\underline u}(x):=u(x)-k$ are a positive and negative solution of \eqref{eq critica}. It is then easily seen that
\begin{align*}
\underline u(x)&=\cT (\underline u)(x)+\alpha\leq \cT_\lambda(\underline u)(x)+\alpha,\\
\overline u (x)&=\cT (\overline u)(x)+\alpha\geq \cT_\lambda(\overline u)(x)+\alpha,
\end{align*}
for $x\in M$, and $0<\lambda<1$. Namely, for every $0<\lambda<1$, the continuous functions $\underline u$ and $\overline u$ are, respectively, a subsolution and a supersolution of \eqref{beta eqdisHJ}, with $\beta=\alpha$. By the comparison principle stated in Proposition \ref{prop comparison}, we conclude that $\underline u\leq u^\alpha_\lambda \leq \overline u$ on $M$ for every $0<\lambda<1$. This implies  that the family $(u^\alpha_\lambda)_{ 0<\lambda<1}$ is equibounded.

To prove the {\em only if} part, it is enough to observe that $u^\beta_\lambda=u^\alpha_\lambda -\frac{\alpha -\beta}{1-\lambda}$.
\end{proof}

\section{Proof of the main Theorem}

In this section, we will prove  both Theorem \ref{maintheorem} and the first characterization given in Proposition \ref{main.prop}. 

Again, to simplify notations, we will assume in the sequel that the critical value $\alpha$ is equal to 0. As previously noted, this does not affect the generality. Therefore the discounted equation rereads as 

\begin{equation}
\label{eqdisHJ}
u= \cT_\lambda  (u),
\end{equation}
where $\lambda$ is a real parameter such that $0<\lambda<1$. We will denote by $u_\lambda$ the unique solution of equation \eqref{eqdisHJ}.  Since we are assuming $\alpha=0$, the discrete version of the critical Hamilton--Jacobi equation is 
\begin{equation}
\label{eqHJ}
u= \cT (u).
\end{equation}
Let us denote by $\cM_0$ the set of projected Mather measures (on $M$) for \eqref{eqHJ} (see the appendix,   Definition \ref{defMather}) and set 
$u_0(x):= \sup_{u\in \cF_-}  u(x)$ for every $x\in M$, where $\cF_{-}$ is the set of subsolutions $u$ of \eqref{eqHJ} such that 
\[
\int_{M} u(x)\,d\mu(x)\leq 0\quad\hbox{for every $\mu\in\cM_{0}$}.
\]
The following holds:

\begin{prop}
\label{prop.promedio} 
Let  $u$ be a  limit point of the functions $u_\lambda $,  as $\lambda \to 1^-$. Then,   $u\in \cF_{-}$, i.e.    for any measure $\mu $ in $\cM_0$ we have 
$$\int_M u(x)\, d\mu(x) \le 0.$$
In particular, $u \le u_0$.
\end{prop}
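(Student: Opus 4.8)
The plan is to establish the integral inequality $\int_M u\,d\mu\le 0$ for an arbitrary $\mu\in\cM_0$ by testing the discounted equation \eqref{eqdisHJ} against $\mu$ and using the fact that a projected Mather measure is, by its very definition, "invariant" with respect to the transport built from cost-minimizing sequences. More precisely, I would first recall from the appendix (Definition \ref{defMather}) that $\mu$ is the projection of a measure $\tilde\mu$ on $M\times M$ supported on pairs $(y,x)$ realizing a calibrated/minimizing configuration for the critical equation, so that $\int_M f(x)\,d\mu(x)=\int_{M\times M} f(x)\,d\tilde\mu(y,x)=\int_{M\times M} f(y)\,d\tilde\mu(y,x)$ for every $f\in C^0(M,\R)$, together with the key property $\int_{M\times M} c(y,x)\,d\tilde\mu(y,x)=0$ (since $\alpha=0$ after normalization).

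The main computation is then as follows. Fix $0<\lambda<1$. Since $u_\lambda=\cT_\lambda(u_\lambda)$, for every pair $(y,x)$ we have $u_\lambda(x)\le \lambda u_\lambda(y)+c(y,x)$. Integrating this against $\tilde\mu$ and using the two invariance identities above gives
\[
\int_M u_\lambda\,d\mu \;\le\; \lambda\int_M u_\lambda\,d\mu \;+\;\int_{M\times M} c(y,x)\,d\tilde\mu(y,x)\;=\;\lambda\int_M u_\lambda\,d\mu,
\]
hence $(1-\lambda)\int_M u_\lambda\,d\mu\le 0$, and since $1-\lambda>0$ we conclude $\int_M u_\lambda\,d\mu\le 0$ for every $\lambda\in(0,1)$. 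Now if $u$ is a limit point of $(u_\lambda)$ as $\lambda\to 1^-$, the convergence is uniform along the relevant subsequence (by the equicontinuity and equiboundedness from Proposition \ref{basicpropertiesdiscounted}, recalling that with $\alpha=0$ the solutions $u_\lambda=u^0_\lambda$ are equibounded), so $\int_M u_\lambda\,d\mu\to\int_M u\,d\mu$ and therefore $\int_M u\,d\mu\le 0$. Since $u$ is a limit of solutions $u_\lambda$ of \eqref{eqdisHJ}, it is a solution of \eqref{eqHJ}, hence in particular a subsolution; combined with the integral bound this places $u\in\cF_-$, and therefore $u\le u_0=\sup_{w\in\cF_-} w$ pointwise.

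I expect the only delicate point to be making the integration-against-$\tilde\mu$ argument fully rigorous, namely justifying the two "invariance" identities $\int f(x)\,d\tilde\mu=\int f(y)\,d\tilde\mu$ and $\int c\,d\tilde\mu=0$ directly from whichever form of Definition \ref{defMather} is adopted in the appendix (closed measures / Mather measures for the discrete action), and checking that $c$ and $u_\lambda$ are $\tilde\mu$-integrable — which is immediate here since $M$ is compact and $c$, $u_\lambda$ are continuous, so no subtlety of the kind addressed by Proposition \ref{integrability} actually arises at this stage. Everything else is the short contraction-type estimate above plus the uniform-convergence passage to the limit.
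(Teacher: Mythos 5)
Your argument is correct and is essentially the paper's own proof: both integrate the pointwise inequality $u_\lambda(x)\le\lambda u_\lambda(y)+c(y,x)$ against a Mather measure $\tilde\mu$, use closedness and $\int c\,d\tilde\mu=0$ to get $(1-\lambda)\int_M u_\lambda\,d\mu\le 0$, divide by $1-\lambda$, and pass to the uniform limit. The only cosmetic difference is your gloss of $\tilde\mu$ as "supported on calibrated configurations," which is not the definition used here, but the two properties you actually invoke (equal marginals and zero total cost) are exactly the ones the paper uses.
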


\begin{proof}
 Since  $u_\lambda $ is a solution of  (\ref{eqdisHJ})  we have   
  $$u_\lambda(x)-\lambda u_\lambda(y) \le c(y,x) \qquad\hbox{for every $(x,y)\in M\times M$}. $$
 Integrating the inequality above with respect to a Mather measure $ \tilde\mu $ defined on $M\times M$ yields
 $$\int_{M\times M} \big(u_\lambda(x)-\lambda u_\lambda(y)\big)\,d \tilde\mu(x,y) \le \int_{M\times M} c(y,x)\,d \tilde\mu(x,y). $$
 Since $\tilde\mu $ is a Mather measure  the right hand side of this  inequality is zero. Therefore, we have 
 $$(1-\lambda )\int_M   u_\lambda(x)\,d \mu(x) \le 0 , $$
 where $\mu$ is the projection of $\tilde\mu$ on either the first or second factors of $M\times M$.  Dividing by $1-\lambda >0$  we conclude that $\int_M   u_\lambda(x)d \mu(x) \le 0  $. So if $u$ is a uniform  limit of $u_{\lambda_i}$ for some sequence $\lambda_i\to 1^{-}$, we obtain the first assertion. Since $u$ is a solution, it is in particular a subsolution; therefore  $u \le u_0$. 
\end{proof}

 To prove the other inequality, we need to introduce a special  class of measures. 
 Given a sequence  $\bar x $ in $S_{\infty} (M)$  and a positive  $\lambda <1 $, we denote by 
 $\tilde \mu_{\bar x}^\lambda$ the probability measure in $M \times M$ defined as 
 $$\int_{M\times M}f(x,y) \,d\tilde \mu_{\bar x}^\lambda(x,y)=  a_\lambda \sum_{n=0}^{\infty}\lambda ^n f(x_{-n-1},x_{-n} ) $$ 
 for any continuous function $f:M\times M\to \R$, where $a_\lambda= 1-\lambda $.  The choice of the constant $ a_\lambda $ guarantees that $ \tilde \mu_{\bar x}^\lambda $ is a probability measure.

  \begin{lem}
\label{constr.mather}
Let $ (\bar x^\lambda ) $,   with $0< \lambda <1 $,  be a family  of sequences in $S_\infty(M)$. 
If $\tilde\mu $ is an accumulation point of $ \tilde \mu_{\bar x^\lambda}^\lambda$ as $\lambda$  tends to $1$, then $\tilde\mu $  is a closed measure. Moreover, if  for every $\lambda$ the sequence $\bar x^\lambda $  realizes the infimum in \eqref{solutionulambda},
then the  measure  $\tilde\mu $   is a Mather measure. 
\end{lem}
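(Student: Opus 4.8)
The plan is to prove the two assertions of Lemma \ref{constr.mather} in turn, first showing that any accumulation point $\tilde\mu$ is a closed measure, then upgrading to a Mather measure under the minimizing hypothesis. Recall that a closed (or holonomic) measure $\tilde\mu$ on $M\times M$ is characterized by the property that $\int_{M\times M}\big(g(x)-g(y)\big)\,d\tilde\mu(x,y)=0$ for every $g\in C^0(M,\R)$, and that a Mather measure is a closed measure minimizing $\int_{M\times M}c(y,x)\,d\tilde\mu$, whose value at the minimum is the critical constant (here normalized, or to be tracked as $\alpha$).

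For the closedness, fix $g\in C^0(M,\R)$ and compute the defect for the approximating measures. Writing $a_\lambda=1-\lambda$, one has
\[
\int_{M\times M}\big(g(x)-g(y)\big)\,d\tilde\mu_{\bar x^\lambda}^\lambda(x,y)=a_\lambda\sum_{n=0}^\infty\lambda^n\big(g(x^\lambda_{-n-1})-g(x^\lambda_{-n})\big).
\]
I would reorganize this series as a telescoping-type sum: regrouping, the coefficient of $g(x^\lambda_{-m})$ is $a_\lambda(\lambda^{m}-\lambda^{m-1})$ for $m\ge 1$ plus the boundary term $-a_\lambda g(x^\lambda_0)$, so the whole expression equals $-a_\lambda g(x^\lambda_0)+a_\lambda(\lambda-1)\sum_{m\ge 1}\lambda^{m-1}g(x^\lambda_{-m})$. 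Both contributions are bounded in absolute value by a fixed multiple of $a_\lambda\|g\|_0$ (using $\sum_{m\ge 0}\lambda^m=1/a_\lambda$), hence tend to $0$ as $\lambda\to1^-$. Since this defect is continuous in $\tilde\mu$ for the weak-$*$ topology and vanishes in the limit along any subsequence $\lambda_i\to1^-$ converging to $\tilde\mu$, we get $\int\big(g(x)-g(y)\big)\,d\tilde\mu=0$ for all $g$, i.e.\ $\tilde\mu$ is closed. I also need to note that $\tilde\mu$ is automatically a probability measure, being a weak-$*$ limit of probability measures on the compact space $M\times M$.

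For the second assertion, suppose each $\bar x^\lambda$ realizes the infimum in \eqref{solutionulambda}, so that $u_\lambda(x^\lambda_0)=\sum_{n\ge 0}\lambda^n c(x^\lambda_{-n-1},x^\lambda_{-n})$ (working with $\beta$ normalized so the critical value is $0$, as set up in Section 3). Then $\int_{M\times M}c(y,x)\,d\tilde\mu_{\bar x^\lambda}^\lambda=a_\lambda\sum_{n\ge0}\lambda^n c(x^\lambda_{-n-1},x^\lambda_{-n})=a_\lambda\,u_\lambda(x^\lambda_0)$. By Proposition \ref{basicpropertiesdiscounted}, the family $(u_\lambda)$ is equibounded (this is exactly where the normalization to the critical value is used), so $a_\lambda u_\lambda(x^\lambda_0)\to 0$ as $\lambda\to1^-$; passing to the limit along the subsequence defining $\tilde\mu$ gives $\int_{M\times M}c(y,x)\,d\tilde\mu=0$. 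Since $\tilde\mu$ is a closed probability measure, the general variational characterization of the critical value (Theorem \ref{weakdiscretekam} in the appendix) gives $\int c(y,x)\,d\nu\ge 0$ for every closed probability measure $\nu$; thus $\tilde\mu$ attains the minimum and is by definition a Mather measure.

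The main obstacle I anticipate is the closedness estimate: one must be a little careful that the rearrangement of the infinite series is legitimate (absolute convergence is clear since $c,g$ are bounded and $\lambda<1$) and that the bound on the telescoped expression is genuinely uniform in the sequence $\bar x^\lambda$ — it is, because only $\|g\|_0$ and $\sum\lambda^n=1/(1-\lambda)$ enter, and the stray factor $a_\lambda=1-\lambda$ cancels the divergence. The second part is then comparatively soft, the only real input being the equiboundedness from Proposition \ref{basicpropertiesdiscounted} together with the identity $\int c\,d\tilde\mu_{\bar x^\lambda}^\lambda=(1-\lambda)u_\lambda(x^\lambda_0)$.
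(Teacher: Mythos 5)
Your proof is correct and follows essentially the same route as the paper: the same reindexing/telescoping of the series to bound the closedness defect by $2a_\lambda\Vert g\Vert_0$, and the same identity $\int_{M\times M} c\,d\tilde\mu^\lambda_{\bar x^\lambda}=a_\lambda\, u_\lambda(x^\lambda_0)$ combined with the equiboundedness of $(u_\lambda)$ from Proposition \ref{basicpropertiesdiscounted}. (Two cosmetic slips only: the coefficient of $g(x^\lambda_{-m})$ should be $a_\lambda(\lambda^{m-1}-\lambda^{m})$ rather than $a_\lambda(\lambda^{m}-\lambda^{m-1})$, and the order of the arguments of $c$ is switched relative to the paper's convention; neither affects the absolute-value bound nor the conclusions.)
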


\begin{proof}

For the first part it is enough to show that for any continuous function $\phi :M\to \R $ we have
$$\int_{M\times M} \big(\phi(x)-\phi(y)\big) \,d\tilde \mu(x,y) =0.$$

We have that 
\begin{align}
\nonumber   \int_{M\times M} \big( \phi(y)-\phi(x) \big)\,d\tilde \mu_{\bar x^\lambda}^\lambda(x,y) &=
a_\lambda \sum_{n=0}^{\infty}\lambda ^{n} \big(\phi(x^\lambda_{-n})-\phi (x^\lambda_{-n-1} )\big) \\
\nonumber   &=a_\lambda \Big[ \sum_{n=0}^\infty\lambda ^{n} \phi(x^\lambda_{-n}) -\sum_{n=1}^{\infty} \lambda ^{n-1} \phi(x^\lambda_{-n})\Big]\\
\nonumber   &= a_\lambda \Big[ \phi(x^\lambda_{0})+(\lambda-1)\sum_{n=0}^\infty\lambda ^{n} \phi(x^\lambda_{-n-1}) \Big] \\
\label{exprsimple}&= a_\lambda\Big( \phi(x^\lambda_{0}) - \int_{M\times M}\phi(x)\,  d\tilde\mu_{\bar x^\lambda}^\lambda(x,y)\Big).
\end{align}
It follows that $\Big|\int_{M\times M} \big( \phi(x)-\phi(y) \big)d\tilde \mu_{\bar x^\lambda}^\lambda  \Big|   \le   2a_\lambda   \|\phi\|_0$,
where $ \|\phi\|_0 $ is, as before, the $C^0$ norm of $\phi$. Since $a_\lambda=(1-\lambda)\to 0$ as $\lambda \to1$, we obtain the first part of the lemma.

Furthermore, if for every $0< \lambda < 1$ the sequence $\bar x^\lambda $ realizes the infimum in \eqref{solutionulambda}, we have
\begin{equation}\label{minimizingseq}  \int_{M\times M}  c(x,y) \,d\tilde \mu_{\bar x^\lambda}^\lambda(x,y)  =     a_\lambda \sum_{n=0} ^{\infty} \lambda ^{n} c(x^\lambda_{-n-1}, x^\lambda_{-n} )
     =   a_\lambda u_\lambda (x^\lambda_0).
\end{equation}
 Since the functions $u_\lambda$ are equibounded, and $a_\lambda\to 0$ as $\lambda\to 1$,  for any limit measure $\tilde \mu $ of  $\mu_{\bar x^\lambda}^\lambda $, we obtain 
$\int_{M\times M}  c(x,y)\, d\tilde \mu(x,y)=0$.
Hence $\tilde\mu$ is a Mather measure. 
\end{proof}

The following is a key lemma for the end of the proof and for the second characterization.
  
\begin{lem}
\label{KeyLemma}
Suppose $w$ is a  continuous subsolution of \eqref{eqHJ}. If $\bar x^\lambda=(x^\lambda_{-n})_{n\geq 0}$ is a minimizing sequence for \eqref{solutionulambda}, we have
$$u_\lambda (x^\lambda_0) \ge  \Big(w(x^\lambda_0) - \int_{M\times  M} w(x)\, d\tilde \mu_{\bar x^\lambda}^\lambda(x,y)\Big).$$
\end{lem}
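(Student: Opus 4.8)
The plan is to estimate $u_\lambda(x^\lambda_0)$ from below by exploiting that $\bar x^\lambda$ is a minimizing sequence in \eqref{solutionulambda} (with $\beta = 0$ and critical value normalized to $0$), and that $w$ is a subsolution of \eqref{eqHJ}, i.e. $w(x) \le w(y) + c(y,x)$ for all $x,y \in M$. First I would write out the representation formula
\[
u_\lambda(x^\lambda_0) = \sum_{n=0}^{\infty} \lambda^n c(x^\lambda_{-n-1}, x^\lambda_{-n}),
\]
and apply the subsolution inequality $c(x^\lambda_{-n-1}, x^\lambda_{-n}) \ge w(x^\lambda_{-n}) - w(x^\lambda_{-n-1})$ termwise. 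This gives
\[
u_\lambda(x^\lambda_0) \ge \sum_{n=0}^{\infty} \lambda^n \big( w(x^\lambda_{-n}) - w(x^\lambda_{-n-1}) \big).
\]

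Next I would recognize the right-hand side as exactly the telescoping-type sum already computed in the proof of Lemma \ref{constr.mather}: taking $\phi = w$ there, the displayed chain of equalities ending in \eqref{exprsimple} shows that
\[
\sum_{n=0}^{\infty} \lambda^n \big( w(x^\lambda_{-n}) - w(x^\lambda_{-n-1}) \big) = \frac{1}{a_\lambda}\int_{M\times M}\big(w(y) - w(x)\big)\, d\tilde\mu_{\bar x^\lambda}^\lambda(x,y) = w(x^\lambda_0) - \int_{M\times M} w(x)\, d\tilde\mu_{\bar x^\lambda}^\lambda(x,y),
\]
where the series converges absolutely because $w$ is bounded and $\lambda < 1$ (the partial sums telescope to $w(x^\lambda_0) - \lambda^N w(x^\lambda_{-N}) + (\lambda - 1)\sum_{n=0}^{N-1}\lambda^n w(x^\lambda_{-n-1})$, and the middle term vanishes in the limit). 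Combining the two displays yields the claimed inequality.

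I do not expect a serious obstacle here; the only points requiring minor care are the justification that the series $\sum_n \lambda^n c(x^\lambda_{-n-1},x^\lambda_{-n})$ and $\sum_n \lambda^n(w(x^\lambda_{-n}) - w(x^\lambda_{-n-1}))$ converge (immediate from boundedness of $c$ and $w$ together with $\lambda<1$), so that the termwise comparison is legitimate, and matching the normalization conventions (that we have reduced to $\alpha = 0$ and $\beta = 0$, so that \eqref{solutionulambda} reads as above without the extra constant). Everything else is a direct substitution into the identity \eqref{exprsimple} already established in Lemma \ref{constr.mather}, so the proof is essentially a two-line combination once the subsolution inequality is applied.
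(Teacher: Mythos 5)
Your proof is correct and is essentially the paper's own argument: both apply the subsolution inequality $c(y,x)\ge w(x)-w(y)$ to the representation formula for $u_\lambda(x^\lambda_0)$ and then invoke the telescoping identity \eqref{exprsimple} (the paper phrases the two steps as integrals against $\tilde\mu_{\bar x^\lambda}^\lambda$ via \eqref{minimizingseq}, which is the same series up to the factor $a_\lambda$). The only blemish is a harmless off-by-one in the boundary term of your partial-sum formula, which does not affect the limit since $w$ is bounded and $\lambda<1$.
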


\begin{proof}
By using the fact that $\bar x^\lambda$ realizes the minimum in \eqref{solutionulambda},  together with \eqref{minimizingseq} and \eqref{exprsimple}, we get
\begin{align*}
u_\lambda (x^\lambda_0) & =  \frac{1}{a_\lambda} \int_{M\times M}  c(x,y) \,d\tilde \mu_{\bar x^\lambda}^\lambda(x,y)\\
&  \ge  \frac{1}{a_\lambda}   \int_{M\times M} \big( w(y)- w(x)\big)\, d\tilde \mu_{\bar x^\lambda}^\lambda(x,y) \\
& =  w(x^\lambda_0) -  \int_{M\times M} w(x)\,d \tilde \mu_{\bar x^\lambda}^\lambda(x,y). 
\end{align*}
\end{proof}

 In the above lemma, the continuity of the subsolution is used to  integrate it with respect to the measure $\mu_{\bar x^\lambda}^\lambda$.\smallskip \par

We derive from Lemma \ref{KeyLemma} the following asymptotic result:

\begin{prop}
\label{keycor}
Suppose $u$ is a uniform limit of a  subsequence of  $u_\lambda $  as $\lambda $ converges to 1.
Then, for every (possibly discontinuous) subsolution $w$, we have %
\begin{equation}\label{ineqfondamentale}
u \ge  w-\sup_{\mu\in\cM_{0}} \int_{M} w(x)\, d \mu(x) .
\end{equation}
Therefore $u\geq u_0$.%
\end{prop}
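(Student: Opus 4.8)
The plan is to combine Lemma \ref{KeyLemma} with the measure-theoretic construction of Lemma \ref{constr.mather}. Suppose $u$ is the uniform limit of $u_{\lambda_i}$ along some sequence $\lambda_i\to 1^-$, and fix a subsolution $w$ of \eqref{eqHJ}. For each $i$, choose a minimizing sequence $\bar x^{\lambda_i}\in S_\infty(M)$ for \eqref{solutionulambda} at the point where we wish to evaluate; but since we want the inequality \eqref{ineqfondamentale} to hold pointwise, the cleaner route is to work with the sequences realizing the infimum at a fixed $x\in M$, so set $\bar x^{\lambda_i}$ to be minimizing with $x^{\lambda_i}_0=x$. First I would treat the case where $w$ is continuous: Lemma \ref{KeyLemma} gives
\[
u_{\lambda_i}(x)\ge w(x)-\int_{M\times M} w(x')\,d\tilde\mu^{\lambda_i}_{\bar x^{\lambda_i}}(x',y).
\]
Passing to a further subsequence so that $\tilde\mu^{\lambda_i}_{\bar x^{\lambda_i}}$ converges weakly to some $\tilde\mu$, Lemma \ref{constr.mather} ensures $\tilde\mu$ is a Mather measure; letting $\mu$ be its first-factor projection (a projected Mather measure, hence in $\cM_0$), the continuity of $w$ lets us pass to the limit in the integral, and the left side converges to $u(x)$. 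This yields $u(x)\ge w(x)-\int_M w\,d\mu\ge w(x)-\sup_{\mu\in\cM_0}\int_M w\,d\mu$, which is \eqref{ineqfondamentale} for continuous $w$.

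Next I would remove the continuity hypothesis on $w$. The difficulty is twofold: a merely measurable subsolution need not be integrable against $\tilde\mu^{\lambda_i}_{\bar x^{\lambda_i}}$ in a way that behaves well under weak limits, and Lemma \ref{KeyLemma} is stated for continuous $w$. The standard device is approximation: by Proposition \ref{integrability}, any subsolution $w$ of \eqref{eqHJ} is integrable with respect to every projected Mather measure, so the quantity $\sup_{\mu\in\cM_0}\int_M w\,d\mu$ on the right of \eqref{ineqfondamentale} makes sense. One approximates $w$ from below (or suitably) by continuous subsolutions. Concretely, the Lax--Oleinik operator $\cT$ is order preserving and for any subsolution $w$ the functions $\cT^k(w)$ are continuous for $k\ge 1$, are still subsolutions, and satisfy $\cT^k(w)\ge w$ pointwise (since $w\le\cT(w)$); moreover $\cT(w)$ shares the modulus of continuity of $c$. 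Applying the continuous case to $\cT(w)$ gives $u\ge \cT(w)-\sup_{\mu}\int_M \cT(w)\,d\mu\ge w-\sup_\mu\int_M \cT(w)\,d\mu$. It then remains to check $\sup_\mu\int_M\cT(w)\,d\mu\le\sup_\mu\int_M w\,d\mu$, which should follow from the defining property of Mather/closed measures: for a projected Mather measure $\mu$ coming from $\tilde\mu$ on $M\times M$, one has $\int_M \cT(w)(x)\,d\mu(x)=\int_{M\times M}\cT(w)(x)\,d\tilde\mu(x,y)\le\int_{M\times M}\big(w(y)+c(y,x)\big)\,d\tilde\mu(x,y)=\int_M w\,d\mu + \int c\,d\tilde\mu=\int_M w\,d\mu$, using closedness to identify the two marginals and the Mather property $\int c\,d\tilde\mu=0$. (Care is needed because $w$ is only measurable, but integrability against $\mu$ from Proposition \ref{integrability} is exactly what licenses this computation.)

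Finally, taking the supremum over all subsolutions $w\in\cF_-$ in \eqref{ineqfondamentale} gives, for each such $w$, that $\sup_{\mu\in\cM_0}\int_M w\,d\mu\le 0$ by the very definition of $\cF_-$, hence $u\ge w$ pointwise for every $w\in\cF_-$, and therefore $u\ge\sup_{w\in\cF_-}w=u_0$. Combined with Proposition \ref{prop.promedio}, which gives $u\le u_0$, this shows every uniform limit point of $(u_\lambda)$ equals $u_0$; since the family is equicontinuous and equibounded by Proposition \ref{basicpropertiesdiscounted}, Arzel\`a--Ascoli forces full convergence $u_\lambda\to u_0$, proving both Theorem \ref{maintheorem} and characterization (1) of Proposition \ref{main.prop}. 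The main obstacle is the passage from continuous to general subsolutions in \eqref{ineqfondamentale}: one must make sure the approximation by $\cT(w)$ (or a similar regularization) does not increase the right-hand side and that all integrals against the possibly singular Mather measures are well defined, which is precisely where Proposition \ref{integrability} and the closedness of the limiting measure do the work.
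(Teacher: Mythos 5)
Your treatment of the continuous case is exactly the paper's argument: minimizing sequences, the measures $\tilde\mu^{\lambda}_{\bar x^{\lambda}}$, Lemma \ref{constr.mather} to identify the weak limit as a Mather measure, and Lemma \ref{KeyLemma} to pass to the limit. For discontinuous $w$ you also use the same regularization $w\mapsto\cT(w)$, but you diverge in how you compare $\int_M\cT(w)\,d\mu$ with $\int_M w\,d\mu$: the paper simply observes that $\cT(w)=w$ on $\cA$ (Proposition \ref{continuitysol}) and $\operatorname{supp}\mu\subseteq\cA$ (Proposition \ref{supportcontained}), so the two integrals are \emph{equal}, whereas you try to prove the inequality $\int\cT(w)\,d\mu\le\int w\,d\mu$ directly from closedness and the Mather property.

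Two remarks on that step. First, as written your computation has an orientation error: with the paper's conventions the Mather property reads $\int_{M\times M}c(x,y)\,d\tilde\mu(x,y)=0$ with the arguments in coordinate order (source, target), while your chain produces the reversed integral $\int_{M\times M}c(y,x)\,d\tilde\mu(x,y)$, which need not vanish since neither $c$ nor $\tilde\mu$ is symmetric. The fix is to use the \emph{second} marginal: $\int\cT(w)\,d\mu=\int\cT(w)(y)\,d\tilde\mu(x,y)\le\int\bigl(w(x)+c(x,y)\bigr)\,d\tilde\mu(x,y)=\int w\,d\mu$. Second, to make sense of $\int_{M\times M}w(x)\,d\tilde\mu(x,y)$ for a possibly non-measurable $w$ you must in the end invoke precisely the facts the paper uses ($w=\cT(w)$ on $\cA\supseteq\operatorname{supp}\mu$, which is what underlies Corollary \ref{integrability}), at which point your inequality collapses to the paper's equality; so your detour is repairable but buys nothing over the paper's shorter route. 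The final step (restricting to $w\in\cF_-$ and taking the supremum to get $u\ge u_0$) matches the paper.
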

\begin{proof} We first prove \eqref{ineqfondamentale} when $w$ is continuous. We fix $x_0\in M$, and find, for $0<\lambda<1$,
a sequence $\bar x^\lambda=(x^\lambda_{-n})_{n\geq 0}$ minimizing  \eqref{solutionulambda}, with $x^\lambda_0=x_0$. For each such sequence
$\bar x^\lambda$, we consider the probability measure
$\tilde \mu_{\bar x^\lambda}^\lambda$ on $M\times M$. Extracting further, we can assume that $\tilde \mu_{\bar x^{\lambda_i}}^{\lambda_i}$ converges weakly to a measure $\tilde \mu$ on $M\times M$. From Lemma \ref{constr.mather},
we know that $\tilde \mu$ is a Mather measure. By Lemma \ref{KeyLemma}, we have
$$u_{\lambda_i} (x_0) \ge  w(x_0) -  \int_{M\times  M} w(x)\,
d\tilde \mu_{\bar x^{\lambda_i}}^{\lambda_i}(x,y).$$
As $i\to +\infty$,  the sequence $\lambda_i$ converges to $ 1$, the functions $u_{\lambda_i}$ converge uniformly to $u$, and the probability measures $\tilde \mu_{\bar x^{\lambda_i}}^{\lambda_i}$ converge  to  $\tilde \mu$. Therefore we can pass to the limit in the inequality above, we obtain
$$u (x_0) \ge  w(x_0) -  \int_{M} w(x)\,d\mu(x),$$
where $\mu=\pi_1^*\tilde  \mu $ is a projected Mather measure. This proves \eqref{ineqfondamentale} when $w$ is continuous.

Let us consider the case of a not necessarily continuous subsolution $w$. By Proposition \ref{continuitysol}, we know $\cT(w)$ is a continuous subsolution of \eqref{appendix eqHJ} such that $\cT(w)\geq w$,  and $\cT(w)=w$ on $\cA$. Therefore, by Proposition 
\ref{supportcontained}, we get $\int_{M} w\,d\mu=\int_{M} \cT(w)\,d\mu$,
for every projected Mather measure $\mu$.
Since \eqref{ineqfondamentale} is true for the continuous subsolution $\cT(w)$, we obtain 
\begin{align*}
u &\ge  \cT(w) -  \int_{M} \cT(w)(x)\,d\mu(x),\\
&\ge  w -  \int_{M} w(x)\,d\mu(x).
\end{align*}

If $w\in \cF_-$, we note that $\int_{M} w(x)\,d\mu(x)\leq 0$, for every projected Mather measure $\mu$. Therefore,
in this case, the inequality  \eqref{ineqfondamentale} implies $u\geq w$. Taking the sup over all $w\in \cF_-$ yields
$u\geq u_0=\sup_{w\in \cF_-} w$.
\end{proof}

We now finish  the proof of Theorem  \ref{maintheorem}. 

\begin{proof}   [Proof of Theorem  \ref{maintheorem} ]
The family $(u_\lambda)_{0<\lambda<1}$ is equicontinuous, hence relatively compact in the C$^0$ topology by 
the Arzel\`a-Ascoli theorem. It therefore suffices to show that any 
limit of a  subsequence of solutions $u_\lambda$, as $\lambda $ converges to 1, is necessarily equal to $u_0$.
Let $u$ be such a limit. From Proposition \ref{prop.promedio},  we obtain $u\leq u_0$,  and from Proposition \ref{keycor}
we obtain $u_0\geq u$. 

\end{proof}

\section{Proof of  the second Characterization}

{  In this section, we will prove  the second characterization stated in Proposition \ref{main.prop}, namely that the function $u_0$ obtained as the limit of the solutions of the discounted equations coincide with the function $\hat u_0 :M\to \R $ defined by
$$\hat u_0(x)=  \min_{\mu\epsilon \cM_0}\int h(z,x)\,d\mu(z)\qquad\hbox{for every $x\in M$}.$$

We start by proving the following results:}
\begin{lem}
\label{lemma bof}
The function $\hat u_0$  is a subsolution.
\end{lem}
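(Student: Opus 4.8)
The plan is to show that $\hat u_0(x)=\min_{\mu\in\cM_0}\int h(z,x)\,d\mu(z)$ satisfies $\hat u_0(x)\le\cT(\hat u_0)(x)$ for every $x\in M$, i.e. $\hat u_0(x)\le\hat u_0(y)+c(y,x)$ for all $x,y\in M$. The key fact I would use from Discrete Aubry--Mather theory (as recalled in the appendix) is the triangle-type inequality satisfied by the Peierls barrier at the critical level, namely $h(z,x)\le h(z,y)+c(y,x)$ for all $z,y,x\in M$ (when $\alpha=0$); this is precisely the statement that, for each fixed $z$, the function $x\mapsto h(z,x)$ is a subsolution of \eqref{eqHJ}. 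I would first quote or recall this property.

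Granting that, the argument is short. Fix $x,y\in M$ and let $\mu\in\cM_0$ be a projected Mather measure realizing the minimum defining $\hat u_0(x)$, so that $\hat u_0(x)=\int h(z,x)\,d\mu(z)$. Integrating the inequality $h(z,x)\le h(z,y)+c(y,x)$ against $\mu$ (legitimate since $h(\cdot,x)$ and $h(\cdot,y)$ are continuous and $\mu$ is a probability measure) gives
\[
\hat u_0(x)=\int h(z,x)\,d\mu(z)\le\int h(z,y)\,d\mu(z)+c(y,x).
\]
Now $\int h(z,y)\,d\mu(z)\ge\min_{\nu\in\cM_0}\int h(z,y)\,d\nu(z)=\hat u_0(y)$, so $\hat u_0(x)\le\hat u_0(y)+c(y,x)$. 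Taking the infimum over $y\in M$ yields $\hat u_0(x)\le\cT(\hat u_0)(x)$, which is the subsolution inequality. One should also note in passing that $\hat u_0$ is continuous: $h(\cdot,\cdot)$ is continuous on $M\times M$ (or at least the family $\{h(\cdot,x)\}_x$ is equicontinuous in $x$, being built from $c$), and a minimum over the compact set $\cM_0$ of the continuous functionals $x\mapsto\int h(z,x)\,d\mu(z)$ is continuous; alternatively one can run the same estimate symmetrically in $x$ to get that $\hat u_0$ inherits the modulus of continuity of $c$.

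I do not anticipate a genuine obstacle here: the whole content is the subsolution property of $x\mapsto h(z,x)$, which is standard and is among the facts imported from \cite{BB,Z} in the appendix, together with the elementary observation that a minimum over a compact family of subsolutions-with-a-parameter remains a subsolution once one checks the relevant measure can be transported. The only point requiring mild care is to make sure the measure chosen to realize $\hat u_0(x)$ is allowed to be reused (via the min) to lower-bound $\hat u_0(y)$ — but that is immediate from the definition of $\hat u_0(y)$ as a minimum. If the appendix instead states the Peierls barrier property in the form that $h$ is a fixed point of $\cT$ in the second variable on the Aubry set, I would combine that with the general subsolution inequality $h(z,x)\le h(z,y)+c(y,x)$ valid everywhere, which is the form actually needed.
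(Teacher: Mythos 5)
Your overall strategy is the right one and is essentially the paper's argument unrolled --- integrate the subsolution property of each $x\mapsto h(z,x)$ against a Mather measure, then deal with the infimum over $\cM_0$ --- but the final step, as written, does not close. You fix $x,y$, choose $\mu$ to be a minimizer for $\hat u_0(x)$, and obtain
\[
\hat u_0(x)\;\le\; \int_M h(z,y)\,d\mu(z)+c(y,x),
\]
after which you invoke $\int_M h(z,y)\,d\mu(z)\ge \hat u_0(y)$ and conclude. That last inequality points the wrong way: from $\hat u_0(x)\le A+c(y,x)$ and $A\ge \hat u_0(y)$ one cannot deduce $\hat u_0(x)\le \hat u_0(y)+c(y,x)$; you would need an \emph{upper} bound of $\int_M h(z,y)\,d\mu(z)$ by $\hat u_0(y)$, which is false in general for a measure chosen to minimize at $x$. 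Your closing remark about ``reusing the measure chosen to realize $\hat u_0(x)$ to lower-bound $\hat u_0(y)$'' shows you sensed the delicate point but resolved it in the wrong direction. The repair is immediate: choose $\mu$ to realize the minimum at $y$, so that $\hat u_0(y)=\int_M h(z,y)\,d\mu(z)$; since $\hat u_0(x)$ is a minimum over all of $\cM_0$ we have $\hat u_0(x)\le\int_M h(z,x)\,d\mu(z)$, and integrating $h(z,x)\le h(z,y)+c(y,x)$ against this $\mu$ yields $\hat u_0(x)\le \hat u_0(y)+c(y,x)$.

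Once corrected, your proof coincides in substance with the paper's, which simply cites that a convex combination of the equibounded family of solutions $\{h(y,\cdot)\}_{y\in M}$ is a subsolution (Proposition \ref{basicprop}(1) together with Proposition \ref{solpeierls}) and that an infimum of equibounded subsolutions is again a subsolution (Proposition \ref{basicprop}(2)). The continuity discussion is dispensable: subsolutions in this paper need not be continuous, and the integrals $\int_M h(z,y)\,d\mu(z)$ make sense because $h(\cdot,y)$ is continuous by Proposition \ref{solpeierls}(2).
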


\begin{proof}
For $\mu\in\cM_0$,  define the function 
$$h_\mu (x) =\int_ M h(y,x)\, d\mu (y)\qquad\hbox{for every $x\in M$}. $$
Since $h_\mu$ is the convex combination of {  an equibounded family of} solutions of \eqref{eqHJ}, see (1) of Proposition \ref{solpeierls}, by (1) of Proposition \ref{basicprop}, it is itself a subsolution of \eqref{eqHJ}. As an infimum of {\sl  equibounded} subsolutions, we infer, by the second item of the same proposition,   that $\hat u_0$ is also a subsolution of \eqref{eqHJ}.
\end{proof}

 \begin{lem}
\label{lema<}
We have that  $u_0 \le \hat u_0$  

\end{lem}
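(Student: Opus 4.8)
The plan is to show that every function $u\in\cF_-$ satisfies $u\le\hat u_0$ on $M$, and then take the supremum over such $u$ to conclude $u_0=\sup_{u\in\cF_-}u\le\hat u_0$. So fix $u\in\cF_-$, i.e. a (possibly discontinuous) subsolution of \eqref{eqHJ} with $\int_M u\,d\mu\le 0$ for every $\mu\in\cM_0$. I want to compare $u(x)$ with $h_\mu(x)=\int_M h(y,x)\,d\mu(y)$ for an arbitrary $\mu\in\cM_0$, since $\hat u_0(x)=\min_{\mu\in\cM_0}h_\mu(x)$.

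The key pointwise inequality I would use is the standard ``dynamic programming'' estimate from Discrete Weak KAM theory relating a subsolution of \eqref{eqHJ} to the Peierls barrier: for any subsolution $u$ and any $y,x\in M$ one has $u(x)-u(y)\le h(y,x)$. This is precisely the content of the relevant Proposition in the appendix on the Peierls barrier (it follows because $h(y,\cdot)$ is obtained as a limit/infimum of iterated costs $S_n(y,\cdot)$, and a subsolution satisfies $u(x)\le u(z)+c(z,x)$ along every chain, hence $u(x)-u(y)\le S_n(y,x)$ for all large $n$, so $u(x)-u(y)\le h(y,x)$). Granting this, I integrate in the variable $y$ against $\mu\in\cM_0$:
\[
u(x)-\int_M u(y)\,d\mu(y)\ \le\ \int_M h(y,x)\,d\mu(y)=h_\mu(x).
\]
Here I should make sure $u$ is $\mu$-integrable, which is guaranteed by Proposition \ref{integrability} (all subsolutions of \eqref{eqHJ} are integrable with respect to every projected Mather measure). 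Since $u\in\cF_-$ gives $\int_M u(y)\,d\mu(y)\le 0$, the left-hand side is $\ge u(x)$, so $u(x)\le h_\mu(x)$ for every $\mu\in\cM_0$. Taking the minimum over $\mu$ yields $u(x)\le\hat u_0(x)$ for every $x\in M$.

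Finally, taking the supremum over all $u\in\cF_-$ gives $u_0=\sup_{u\in\cF_-}u\le\hat u_0$, which is the desired inequality. I expect the only real points requiring care to be bookkeeping ones: citing the correct appendix statement for the subsolution/Peierls-barrier inequality $u(x)-u(y)\le h(y,x)$, and invoking Proposition \ref{integrability} to justify integrating a possibly discontinuous subsolution against the projected Mather measure. The rest is a one-line integration and a supremum, so there is no substantive obstacle beyond making sure these cited ingredients are stated in the form needed.
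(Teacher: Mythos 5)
Your proof is correct and follows essentially the same route as the paper: both use Proposition \ref{subpeierls} to get $u(x)-u(y)\le h(y,x)$, integrate against a projected Mather measure, and use $\int_M u\,d\mu\le 0$ to conclude $u\le h_\mu$ before taking the min over $\mu$ and the sup over $u\in\cF_-$. Your extra remark about invoking Proposition \ref{integrability} to justify integrating a possibly discontinuous subsolution is a point the paper leaves implicit.
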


\begin{proof}
By  definition  of  $u_0 $ and $ \hat u_0$,  we only need to show that $u \le h_\mu$ on $M$, 
where $u$ is a subsolution in $\cF_-$ and $\mu$ is a projected Mather measure.
By  proposition \ref{subpeierls} we have 
 $$u(x)-u(z) \le h(z,x) \qquad\hbox{for every $(x,z)\in M\times M$}. $$
Integrating with respect to $\mu\in\cM_0$ we obtain
$$u(x) - \int_M u(z) \,d\mu (z) \le h_\mu(x) . $$ 
Using that $ \int_M u(z) d\mu (z) $ is non-positive, we get the assertion.
\end{proof}

{  We are now ready to prove the announced equality.}

\begin{proof}   [Proof of Proposition  \ref{main.prop} ]
By the previous lemma we already know that  $u_0 \le \hat u_0$. We have to show the reverse inequality $u_0 \ge \hat u_0$. Since we know that $u_0$ is a solution and that $\hat u_0$ is a subsolution, by Proposition 
 \ref{Maxprinciple} we only have to show the inequality on the projected  Aubry set $\cA$.
 
 Fix $y$ in  $\cA$.  By Proposition \ref{solpeierls},  the function $x\mapsto -h(x, y) $ is a  subsolution of \eqref{eqHJ}. Hence, adding the constant $\hat u_0 (y) $ to this function, we obtain that
 $$w(x) = -h(x,y) +\hat u_0 (y), $$
is  a subsolution, which is clearly  in $\cF_-$. So $u_0 \ge w$, in particular, by evaluating at $y$, we get
 $$u_0(y) \ge -h(y,y ) + \hat u_0(y).$$ 
Since $h(y,y) =0$ on the projected Aubry set $\cA$, this yields $u_0\ge \hat u_0$ on $\cA$ and therefore also on $M$.
\end{proof}

   \begin{appendix}
   \section{Discrete Aubry-Mather Theory}
\label{sec:discrete}

\subsection{Lax Oleinik Operator}

Let  $M$ be a compact metric space, and 
$$c:M\times M \to \R $$ be a continuous cost  function. For every function $u:M\to\R$ (not necessarily continuous), we set 
\begin{equation}\label{appendix T}
\cT (u)(x) = \inf_{y\in M} u(y) + c(y,x) \qquad\hbox{for every $x\in M$}.
\end{equation}
When computed for functions $u$ that are continuous on $M$, the operator $\cT$ is the Lax--Oleinik operator as defined in the introduction. As a direct consequence of the definition, we derive

\begin{prop}\label{app prop cont}
Let $u$ be a (possibly discontinuous) real function on $M$ such that $\cT(u)(x)>-\infty$ for every $x\in M$. Then $\cT(u)$ is continuous on $M$, with the same continuity modulus as $c$. 
\end{prop}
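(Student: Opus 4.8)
The statement to prove is Proposition~\ref{app prop cont}: if $u:M\to\R$ is any function with $\cT(u)(x)>-\infty$ for every $x\in M$, then $\cT(u)$ is continuous, with the same modulus of continuity as $c$. The plan is to exploit directly the explicit formula $\cT(u)(x)=\inf_{y\in M}\big(u(y)+c(y,x)\big)$ and the fact that, for fixed $y$, the only dependence on $x$ sits in the term $c(y,x)$, which is uniformly controlled by the continuity modulus of $c$.

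First I would record that $c$, being continuous on the compact metric space $M\times M$, is uniformly continuous; let $\omega$ denote a modulus of continuity for $c$, so that $|c(y,x)-c(y,x')|\le\omega\big(d(x,x')\big)$ for all $y,x,x'\in M$ (here one can take $\omega(t)=\sup\{|c(y,x)-c(y,x')| : d(x,x')\le t\}$, which tends to $0$ as $t\to0$). Next, fix $x,x'\in M$ and an arbitrary $y\in M$. Then
\[
\cT(u)(x)\le u(y)+c(y,x)\le u(y)+c(y,x')+\omega\big(d(x,x')\big).
\]
Taking the infimum over $y\in M$ on the right-hand side — which is legitimate and finite precisely because $\cT(u)(x')>-\infty$ by hypothesis — yields $\cT(u)(x)\le\cT(u)(x')+\omega\big(d(x,x')\big)$. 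Exchanging the roles of $x$ and $x'$ gives the reverse inequality, hence $|\cT(u)(x)-\cT(u)(x')|\le\omega\big(d(x,x')\big)$. This shows simultaneously that $\cT(u)$ is real-valued (finite) everywhere and that it has $\omega$ as a modulus of continuity, which is exactly the claim.

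There is essentially no obstacle here: the only subtlety worth flagging is that one must not take the infimum over $y$ before knowing the resulting quantity is not $-\infty$, which is why the hypothesis $\cT(u)(x')>-\infty$ is invoked at the right moment; and one should note that $u$ itself need not be bounded or continuous — the argument never differentiates the $u(y)$ term with respect to $x$, so its irregularity is irrelevant. I would present the computation in the two displayed lines above and conclude in one sentence.
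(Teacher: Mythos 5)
Your proof is correct and is exactly the ``direct consequence of the definition'' that the paper has in mind (the paper in fact states Proposition~\ref{app prop cont} without writing out a proof). Your care in taking the infimum over $y$ only after adding $\omega(d(x,x'))$ is the right way to handle a possibly discontinuous $u$ for which the infimum need not be attained, and the hypothesis $\cT(u)(x')>-\infty$ is invoked at the correct moment.
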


Since $c$ is continuous, and therefore bounded on the compact space $M\times M$, it is not difficult to see that $\cT(u)$
is bounded below if and only if $u$ is bounded below, and also if and only if $\cT(u)(x)>-\infty$ for some $x\in M$.

The following Theorem  is the discrete version of the Weak KAM theorem a proof of which 
can be found in \cite[Theorem 1.2]{Z}

\begin{thm}
\label{weakdiscretekam}
There is a  unique constant $\alpha$ such that the equation
$$ 
  u= \cT(u)+\alpha
$$
  admits (necessarily continuous) solutions $u:M\to\R$. 
\end{thm}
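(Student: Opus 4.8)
The plan is to establish this as a consequence of the results already proved for the discounted equations, following exactly the strategy that Proposition \ref{basicpropertiesdiscounted} sets up. First I would address \emph{uniqueness} of the constant: if $u = \cT(u) + \alpha$ and $v = \cT(v) + \beta$ both admit solutions, then iterating the Lax--Oleinik operator along minimizing sequences (the $\lambda=1$ analogue of Proposition \ref{prop comparison}) gives, for any $x$ and any $n$,
$$
u(x) \le \sum_{k=0}^{n-1} c(x_{-k-1}, x_{-k}) + n\alpha + v(x_{-n}) + \text{(something)},
$$
and symmetrically with the roles reversed; since $u, v, c$ are all bounded on the compact spaces $M$ and $M \times M$, dividing by $n$ and letting $n \to \infty$ forces $\alpha \le \beta$ and $\beta \le \alpha$, hence $\alpha = \beta$. (Equivalently, one can quote that $\alpha = \lim_n \frac1n \min_{S_n(M)} \sum c$, which is manifestly well defined.)

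For \emph{existence}, I would take $\beta = \alpha$ to be the common value forced by the uniqueness argument — concretely $\alpha := \lim_{\lambda \to 1^-} (1-\lambda) u_\lambda(x_0)$, which one checks is independent of $x_0$ because the $u_\lambda$ are equicontinuous with a fixed modulus (Proposition \ref{basicpropertiesdiscounted}) so the oscillation of $(1-\lambda)u_\lambda$ tends to $0$. Then consider the normalized functions $v_\lambda := u_\lambda^\alpha = u_\lambda - \frac{\alpha}{1-\lambda}$ (in the notation of the excerpt, with $\beta = 0$ after shifting $c$). By Proposition \ref{basicpropertiesdiscounted} the family $\{v_\lambda : 0 < \lambda < 1\}$ is equibounded and equicontinuous, hence by Arzel\`a--Ascoli has a uniform limit point $u$ along some sequence $\lambda_i \to 1^-$. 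Passing to the limit in the discounted equation $v_\lambda(x) = \cT_{\lambda}(v_\lambda)(x) = \inf_{y}\big(\lambda v_\lambda(y) + c(y,x)\big)$: the right-hand side converges uniformly to $\inf_y \big(u(y) + c(y,x)\big) = \cT(u)(x)$ because $\|\lambda_i v_{\lambda_i} - u\|_0 \le (1-\lambda_i)\|v_{\lambda_i}\|_0 + \|v_{\lambda_i} - u\|_0 \to 0$ and the inf over $y$ of a uniformly convergent family converges uniformly. Hence $u = \cT(u)$, i.e.\ $u$ solves the critical equation with constant $\alpha$; and $u$ is automatically continuous since $u = \cT(u)$ and Proposition \ref{app prop cont} applies.

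The main obstacle, such as it is, is the uniqueness half: one must be careful that the inequality obtained by iterating $\cT$ along \emph{sub}optimal sequences for $u$ but near-optimal for $v$ (and vice versa) genuinely yields both inequalities $\alpha \le \beta$ and $\beta \le \alpha$ after the $\frac1n$ rescaling. This is where the compactness of $M$ (to get uniform boundedness of $u$, $v$, $c$, so the boundary terms $\tfrac1n v(x_{-n})$ and $\tfrac1n u(x_{-n})$ vanish) is essential. Everything else is a routine passage to the limit, and since the excerpt explicitly points to \cite[Theorem 1.2]{Z} for a complete proof, I would at most sketch these steps and refer there for details.
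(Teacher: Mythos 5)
The paper does not actually prove this theorem: it is quoted from \cite[Theorem 1.2]{Z}, so there is no internal argument to compare against. Your uniqueness half is sound: iterating $u=\cT(u)+\alpha$ along a sequence that is optimal for $u$ and merely admissible for $v$ gives $v(x_0)-u(x_0)\le \sup_M(v-u)+n(\beta-\alpha)$ for every $n$, and compactness of $M$ (hence boundedness of $u,v$) forces $\beta\ge\alpha$, then symmetrize. Equivalently, $-\alpha=\lim_n \frac1n\min_{S_n(M)}c(\bar x)$, which pins the constant down.

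The existence half, however, has a genuine circularity as written. You define $\alpha:=\lim_{\lambda\to1^-}(1-\lambda)u_\lambda(x_0)$ without showing this limit exists, and you then invoke Proposition \ref{basicpropertiesdiscounted} to get equiboundedness of $u_\lambda-\frac{\alpha}{1-\lambda}$. But the paper's proof of the equiboundedness in Proposition \ref{basicpropertiesdiscounted} starts from ``Take a solution $u$ of $u=\cT(u)+\alpha$'' --- i.e.\ it presupposes exactly the existence statement of Theorem \ref{weakdiscretekam}. The repair is standard but must be made explicit: normalize instead by $v_\lambda:=u_\lambda-u_\lambda(x_0)$, whose equiboundedness follows from equicontinuity (same modulus as $c$) and compactness of $M$ alone, note that $(1-\lambda)|u_\lambda(x_0)|\le\|c\|_0$, extract a subsequence $\lambda_i\to1^-$ along which simultaneously $v_{\lambda_i}\to u$ uniformly and $(1-\lambda_i)u_{\lambda_i}(x_0)\to-\alpha_0$ for some constant $\alpha_0$, and pass to the limit in $v_\lambda(x)=\inf_y\bigl(\lambda v_\lambda(y)+c(y,x)\bigr)-(1-\lambda)u_\lambda(x_0)$ to get $u=\cT(u)+\alpha_0$. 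Your uniform-convergence-of-infima argument then applies verbatim, and uniqueness of the constant (already proved) upgrades the subsequential limit of $(1-\lambda)u_\lambda$ to a full limit a posteriori. With that reordering the proof is complete and is the expected one.
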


Such a constant $\alpha$ is called the {\em critical value}.

\subsection{Mather measures}  For $i =1,2$, let  $\pi_i$ be the projection  to each factor of $M\times M$, that is  $( x_1,x_2) \mapsto x_i$.  For a probability measure  $\tilde \mu $  in $M\times M$,  the projected measures $\pi_i^* \tilde \mu$ are given by the formula
$$\int_M f(x)\, d\pi_i^*\tilde\mu(x)= \int_{M\times M} (f\circ \pi_i)(x,y)\, d \tilde \mu(x,y).$$
A Borel probability measure  in $M\times M$ is called  {\it closed } if 
the projections are the same, i.e. $\pi_1^*\tilde  \mu = \pi_2^*\tilde  \mu $. A proof of the following proposition can be found in \cite[Theorem 15]{BB}.
\begin{prop}\label{app prop Mather measure}
We have
\begin{equation}\label{app eq Mather measure}
-\alpha =\min_{\nu}\int_{M\times M} c(x,y)\,d\tilde\nu(x,y),
\end{equation}
where the minimum is computed for $\tilde\nu$ in the class of closed probability measures on $M\times M$.
\end{prop}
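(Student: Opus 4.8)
The plan is to prove the two inequalities separately: first that $\int_{M\times M} c\,d\tilde\nu\ge-\alpha$ for \emph{every} closed probability measure $\tilde\nu$, and then that there is a closed probability measure $\tilde\mu$ for which equality holds. Together these show that the infimum equals $-\alpha$ and is attained, so it is a minimum.

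For the lower bound I would start from a continuous solution $u$ of the critical equation $u=\cT(u)+\alpha$, whose existence is guaranteed by Theorem \ref{weakdiscretekam}. From $u(y)=\inf_{z}\big(u(z)+c(z,y)\big)+\alpha\le u(x)+c(x,y)+\alpha$ one gets the pointwise inequality $c(x,y)\ge u(y)-u(x)-\alpha$ on $M\times M$. Integrating it against an arbitrary closed probability measure $\tilde\nu$, and using $\pi_1^*\tilde\nu=\pi_2^*\tilde\nu$ to cancel the two $u$-terms (here only the continuity, hence $\tilde\nu$-integrability, of $u$ is used), yields $\int_{M\times M}c\,d\tilde\nu\ge-\alpha$. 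This half is routine.

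For the reverse inequality the idea is to produce the minimizing measure from the discounted solutions, exactly along the lines of Lemma \ref{constr.mather}. After normalizing $\alpha=0$ (replacing $c$ by $c+\alpha$, which shifts $\int c\,d\tilde\nu$ by $\alpha$ for every probability measure $\tilde\nu$), let $u_\lambda$ be the solution of $u_\lambda=\cT_\lambda(u_\lambda)$; by Proposition \ref{basicpropertiesdiscounted} the family $(u_\lambda)_{0<\lambda<1}$ is equibounded. Fix a point $x_0\in M$ and, for each $\lambda$, a sequence $\bar x^\lambda\in S_\infty(x_0)$ realizing the infimum in the representation formula \eqref{solutionulambda}, and form the probability measures $\tilde\mu_{\bar x^\lambda}^\lambda$ on $M\times M$. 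By weak-$*$ compactness of the probability measures on the compact space $M\times M$, extract $\lambda_i\to 1^-$ along which $\tilde\mu_{\bar x^{\lambda_i}}^{\lambda_i}$ converges weakly to some $\tilde\mu$. By the first part of Lemma \ref{constr.mather}, $\tilde\mu$ is closed; and by \eqref{minimizingseq} one has $\int_{M\times M}c\,d\tilde\mu_{\bar x^{\lambda_i}}^{\lambda_i}=a_{\lambda_i}u_{\lambda_i}(x_0)\to 0$, since $a_\lambda=1-\lambda\to0$ and the $u_\lambda$ are equibounded. Passing to the weak limit gives $\int_{M\times M}c\,d\tilde\mu=0=-\alpha$, which together with the lower bound proves that the infimum equals $-\alpha$ and is achieved at $\tilde\mu$.

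I expect the only delicate point to be the verification that the weak limit $\tilde\mu$ is closed, i.e. that $\int_{M\times M}\big(\phi(x)-\phi(y)\big)\,d\tilde\mu=0$ for every continuous $\phi:M\to\R$; this rests on the telescoping estimate $\big|\int_{M\times M}\big(\phi(x)-\phi(y)\big)\,d\tilde\mu_{\bar x^\lambda}^\lambda\big|\le 2a_\lambda\|\phi\|_0$ carried out in the proof of Lemma \ref{constr.mather}, whose right-hand side vanishes as $\lambda\to1^-$. Everything else is a straightforward passage to the limit in inequalities on compact metric spaces. (An alternative route to the reverse inequality is a Hahn--Banach/minimax argument applied to the concave functional $g\mapsto\sup_{x\in M}\big(\cT(g)(x)-g(x)\big)$ on the space of continuous functions, producing the minimizing closed measure directly; but the discounted construction above is the one in the spirit of the rest of the paper.)
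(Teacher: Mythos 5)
Your proposed proof is correct, but note that the paper itself does not prove this proposition: it is quoted from \cite[Theorem 15]{BB}, where it is obtained by a duality argument (essentially Kantorovich/Hahn--Banach duality between closed measures and the cone of coboundaries $u(y)-u(x)$, in the spirit of Ma\~n\'e's and Bernard--Buffoni's work). What you do instead is assemble a self-contained proof from the paper's own toolbox. Your lower bound is the standard one: a continuous critical solution $u=\cT(u)+\alpha$ (Theorem \ref{weakdiscretekam}) gives $c(x,y)\ge u(y)-u(x)-\alpha$, and integration against any closed probability measure kills the coboundary. For attainment you reuse the discounted construction: the measures $\tilde\mu_{\bar x^\lambda}^\lambda$ built from minimizing sequences, the telescoping estimate showing any weak-$*$ limit is closed, the identity \eqref{minimizingseq} giving $\int c\,d\tilde\mu_{\bar x^\lambda}^\lambda=(1-\lambda)u_\lambda(x_0)$, and the equiboundedness of $(u_\lambda)$ from Proposition \ref{basicpropertiesdiscounted}. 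I checked that this introduces no circularity: you invoke only the \emph{first} conclusion of Lemma \ref{constr.mather} (closedness of the limit), whose proof is a pure telescoping computation, and \eqref{minimizingseq} is a direct identity; neither depends on the notion of Mather measure, and Proposition \ref{basicpropertiesdiscounted} rests only on the weak KAM theorem and the comparison principle. The trade-off between the two routes is worth recording: the duality proof in \cite{BB} is structurally cleaner and can be run \emph{before} (indeed, to establish) the existence of the critical value, whereas your argument presupposes Theorem \ref{weakdiscretekam} but buys a concrete minimizer and stays entirely within the machinery the paper develops for its main theorem. Your parenthetical remark about the minimax alternative is exactly the \cite{BB} route.
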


 {\begin{defn}\label{defMather} \rm A closed probability measure that is a solution of the minimization problem \eqref{app eq Mather measure} is called 
 { \it Mather measure}. The set of Mather measures  on $M\times M $ is denoted by $\cM$ and the set of projected Mather measures is denoted by $\cM_0$.
 \end{defn}
 }

   Measures on $M\times M $  will be denoted with a tilde, while for the projection we will use the same letter without the tilde.

\subsection{Peierls Barrier and Aubry set}
For each pair of points $x$ and $y$ let $S_n(x,y)$ be the set of  $M$-valued  sequences of the form $\bar x=(x_{-n},x_{-n+1},\dots,x_{-1}, x_{0})$ such that $x_{-n}=x$ and $x_{0}=y$. Denote by $c(\bar x)$ the cost of the sequence, that is,   $ \sum\limits_{i=1}^{n}c(x_{-i},x_{-i+1})$.
  For each natural number $n$ define the function 
$$c_n(x,y) =\inf_{\bar x \in  S_n(x,y)} c(\bar x). $$
For each  real value $\kappa$, we define the functions
$$h_n^\kappa(x,y) =c_n(x,y) +n\kappa,$$
$$h^\kappa(x,y) =\liminf_{n \to \infty}  h^\kappa_n(x,y).$$
The function $h^\kappa$ is finite valued if and only if $\kappa$ equals the critical value $\alpha$, see \cite{Z}. 
The function $h^\alpha $ is called the Peierls barrier. As noted before, up to adding a constant to the cost, we can always suppose that $\alpha$ is zero. To simplify notations, this will be always assumed in the rest of this appendix. Moreover, we will write $h$, $h_{n}$ instead of $h^\alpha$, $h^{\alpha}_n$, respectively.

Since 
\begin{equation}\label{triangle}
h_{n+m}(x,z) \le h_n(x,y) + h_m(y,z) \quad \hbox{for every positive integers $m$ and $n$,}
\end{equation}
 it is easy to show that $h$ satisfies the triangle inequality. The symmetrized function $h(x,y)+h(y,x)$  is nonnegative, but,
 usually, it is not a distance function because in general $h(x,x) $ can be positive and
there might be different points such that $ h(x,y)+h(y,x) =0$. Partially motivated by this, we define the projected Aubry $\cA$  by
$$
\cA:=\{y\in M\,:\,h(y,y) =0\,\}.
$$

\subsection{Subsolutions and supersolutions}
We consider the following discrete version of the Hamilton--Jacobi equation:
\begin{equation}\label{appendix eqHJ}
u(x)=\cT(u)(x)\quad\hbox{for every $x\in M$.}
\end{equation}
A  function  $u$ defined on $M$ is called subsolution of \eqref{appendix eqHJ} if 
$$ u \le  \cT (u). $$
A  function  $u$ defined on $M$ is called supersolution of \eqref{appendix eqHJ} if
$$ u \ge  \cT (u). $$
A   function  $u$ defined on $M$ is called solution  if it is both a subsolution and supersolution, i.e. if it is a fixed point of the operator $\cT$.
The following holds:
\begin{prop}
\label{continuitysol}
Let $u$  be a subsolution of \eqref{appendix eqHJ}, which is bounded from below. Then $\cT(u)$ is a continuous subsolution of \eqref{appendix eqHJ} such that $\cT(u)\geq u$ on $M$, and $\cT(u)=u$ on $\cA$. 
In particular, $u$ is continuous on the projected Aubry set $\cA$.
\end{prop}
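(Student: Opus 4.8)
The plan is to establish the four assertions in the order they are stated, the first three being essentially formal consequences of Proposition \ref{app prop cont} and the monotonicity of $\cT$, and the identity $\cT(u)=u$ on $\cA$ being the substantive point.

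First I would observe that, since $u$ is bounded from below and $c$ is bounded on the compact space $M\times M$, we have $\cT(u)(x)>-\infty$ for every $x\in M$; hence, by Proposition \ref{app prop cont}, $\cT(u)$ is continuous on $M$ with the same modulus of continuity as $c$ (and, being continuous on a compact space, bounded). The assumption that $u$ is a subsolution of \eqref{appendix eqHJ} is exactly the inequality $u\le\cT(u)$ on $M$, which is one of the claimed relations. Applying $\cT$ to both sides of $u\le\cT(u)$ and using that $\cT$ is order preserving — if $f\le g$ then $f(y)+c(y,x)\le g(y)+c(y,x)$ for all $x,y$, so $\cT(f)\le\cT(g)$ — we obtain $\cT(u)\le\cT(\cT(u))$, i.e. $\cT(u)$ is itself a subsolution of \eqref{appendix eqHJ}.

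The heart of the argument is showing $\cT(u)(y)\le u(y)$ for $y\in\cA$; together with $u\le\cT(u)$ this gives the equality on $\cA$. The first step is to iterate the subsolution inequality $u(b)\le u(a)+c(a,b)$: along any sequence $\bar x=(x_{-n},\dots,x_0)\in S_n(x,y)$ one gets $u(y)\le u(x)+c(\bar x)$, and taking the infimum over such $\bar x$ yields $u(y)-u(x)\le c_n(x,y)$ for every $n\ge 1$ (recall $\alpha=0$, so $h_n=c_n$). Now fix $y\in\cA$, so that $\liminf_n c_n(y,y)=h(y,y)=0$, and choose $n_k\to\infty$ with $c_{n_k}(y,y)\to 0$. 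Since $S_{n_k}(y,y)$ is compact and $\bar x\mapsto c(\bar x)$ continuous, pick a minimizing sequence $\bar x=(x_{-n_k},\dots,x_{-1},x_0)$ with $x_{-n_k}=x_0=y$ and $c(\bar x)=c_{n_k}(y,y)$. Removing the terminal edge, the initial segment from $y$ to $x_{-1}$ lies in $S_{n_k-1}(y,x_{-1})$ and has cost $c_{n_k}(y,y)-c(x_{-1},y)$, so $c_{n_k-1}(y,x_{-1})\le c_{n_k}(y,y)-c(x_{-1},y)$. Combining this with the bound $u(x_{-1})-u(y)\le c_{n_k-1}(y,x_{-1})$ gives $u(x_{-1})+c(x_{-1},y)\le u(y)+c_{n_k}(y,y)$, whence
$$\cT(u)(y)\le u(x_{-1})+c(x_{-1},y)\le u(y)+c_{n_k}(y,y).$$
Letting $k\to\infty$ yields $\cT(u)(y)\le u(y)$, as required; thus $u=\cT(u)$ on $\cA$.

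The final assertion is then immediate: on $\cA$ we have $u=\cT(u)$, and $\cT(u)$ is continuous on all of $M$, so $u|_{\cA}$ coincides with the restriction of a continuous function and is therefore continuous on $\cA$. I expect the only genuine obstacle to be the $\cA$-step above, specifically organizing the telescoping correctly — using a minimizing $n_k$-cycle through $y$ whose cost tends to $0$, splitting off its last edge, and matching it against the inequality $u(b)-u(a)\le c_m(a,b)$; the rest is formal manipulation of $\cT$ and its monotonicity.
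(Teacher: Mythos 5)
Your proof is correct. Note, however, that the paper itself does not prove the substantive part of this proposition: it simply cites \cite[Proposition A.10]{Z} for the statement that $\cT(u)$ is a continuous subsolution with $\cT(u)\geq u$ on $M$ and $\cT(u)=u$ on $\cA$, and then only deduces the continuity of $u$ on $\cA$ from Proposition \ref{app prop cont}. You have instead supplied a complete self-contained argument. The easy parts (finiteness and continuity of $\cT(u)$ from boundedness below of $u$ and boundedness of $c$, the inequality $u\le \cT(u)$ as the definition of subsolution, and $\cT(u)\le \cT(\cT(u))$ by monotonicity) are handled exactly as one would expect. The key step --- showing $\cT(u)(y)\le u(y)$ for $y\in\cA$ --- is done by telescoping the subsolution inequality to get $u(b)-u(a)\le c_n(a,b)$, taking a minimizing $n_k$-cycle through $y$ with $c_{n_k}(y,y)\to 0$ (which exists since $h(y,y)=\liminf_n c_n(y,y)=0$ under the normalization $\alpha=0$, and the infimum over the compact set $S_{n_k}(y,y)$ is attained), splitting off its last edge to bound $c_{n_k-1}(y,x_{-1})$, and concluding $\cT(u)(y)\le u(x_{-1})+c(x_{-1},y)\le u(y)+c_{n_k}(y,y)$. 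This is sound (for $k$ large enough that $n_k\ge 2$) and is essentially the standard argument hidden inside the cited reference; your write-up has the advantage of making the paper's appendix self-contained on this point.
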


\begin{proof}
The first assertion has been proven in \cite[Proposition A.10]{Z}. This immediately gives the continuity of $u$ on $\cA$ in view of Proposition \ref{app prop cont}.
\end{proof}

      The following is mostly contained in  \cite[Lemma 2.32 and Proposition A.8.3]{Z}.
    
\begin{prop}\label{basicprop}\ 
\begin{itemize}
\item[1)]  A convex combination of {  an equibounded family of} subsolutions is a subsolution.\medskip
\item[2)]   If $\{u_i\}_{i\in \cI}$  is an {  equibounded} family of subsolutions, then $v=\inf_{i\in\cI} u_i$ and $V=\sup_iu_i $ are subsolutions.\medskip 
\item[3)]    If $u_i$  is  an {  equibounded}  family of supersolutions, then    $V=\inf_{i\in\cI} u_i $    is a supersolution.
\end{itemize} 
\end{prop}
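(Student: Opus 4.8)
\textbf{Proof proposal for Proposition \ref{basicprop}.} The plan is to verify the three items by directly unwinding the definition of $\cT$ and exploiting its order-preserving and (suitably) affine behaviour; the equiboundedness hypotheses are there only to guarantee that the infima/suprema/convex combinations land among real-valued functions so that $\cT$ can be applied to them. Throughout I will use the two elementary facts: (a) $\cT$ is monotone, i.e. $u\le v$ on $M$ implies $\cT(u)\le\cT(v)$ on $M$, which is immediate since taking $\inf_y$ of a pointwise-larger family of functions $y\mapsto u(y)+c(y,x)$ gives a larger value; and (b) for any family $\{u_i\}_{i\in\cI}$ of functions bounded below by a common constant, $\cT\big(\inf_i u_i\big)\le \inf_i \cT(u_i)$ (again by monotonicity, since $\inf_i u_i\le u_j$ for each $j$), and dually $\cT\big(\sup_i u_i\big)\ge\sup_i\cT(u_i)$ when the family is bounded above.

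For item 1), let $u=\sum_{k=1}^N t_k u_k$ with $t_k\ge 0$, $\sum_k t_k=1$, where each $u_k$ is a subsolution and the family is equibounded (so $u$ is a well-defined real function). Fix $x\in M$ and pick $y\in M$. Then
\[
\sum_{k=1}^N t_k u_k(x)\ \le\ \sum_{k=1}^N t_k\big(\cT(u_k)(x)\big)\ \le\ \sum_{k=1}^N t_k\big(u_k(y)+c(y,x)\big)\ =\ u(y)+c(y,x),
\]
using the subsolution inequality for each $u_k$ and then the bound $\cT(u_k)(x)\le u_k(y)+c(y,x)$. Taking the infimum over $y\in M$ on the right gives $u(x)\le\cT(u)(x)$, so $u$ is a subsolution. (The same computation works verbatim for a general convex combination, i.e. an integral against a probability measure, replacing the finite sum by an integral.)

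For item 2), write $v=\inf_{i\in\cI}u_i$ and $V=\sup_{i\in\cI}u_i$. For $v$: each $u_i$ is a subsolution, so $u_i\le\cT(u_i)$; combining with $v\le u_i$ and monotonicity (a) gives $v\le u_i\le\cT(u_i)$... more directly, for any $i$ we have $\cT(v)\le\cT(u_i)$ by (a)... wait, that is the wrong direction. The clean argument is: by fact (b), $\cT(v)=\cT\big(\inf_i u_i\big)\ge \cT(u_i)$? No --- $\inf_i u_i\le u_i$, monotonicity gives $\cT(\inf_i u_i)\le\cT(u_i)$, so $\cT(v)\le\inf_i\cT(u_i)$, which is an upper bound on $\cT(v)$, not what we want. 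Instead argue pointwise from scratch, which is the robust route and the one I would actually carry out: fix $x\in M$ and $\varepsilon>0$; choose $i_0$ with $u_{i_0}(x)\le v(x)+\varepsilon$; then
\[
v(x)+\varepsilon\ \ge\ u_{i_0}(x)\ \ge\ \cT(u_{i_0})(x)\ \ge\ \cT(v)(x),
\]
where the last inequality is monotonicity (b) applied to $v\le u_{i_0}$. Letting $\varepsilon\to 0$ yields $v(x)\ge\cT(v)(x)$, i.e. $v$ is a subsolution. For $V$: fix $x$, $\varepsilon>0$, and $y\in M$; choose $i_1$ with $u_{i_1}(y)\le V(y)+\varepsilon$... actually here take any $i$ and compute $V(x)\ge u_i(x)\ge\cT(u_i)(x)$, then bound $\cT(u_i)(x)$ below by... this needs care. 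The honest pointwise argument: for each $x$ and each $y$, and each $i$, $u_i(x)\le V(x)$, and $u_i(x)\le\cT(u_i)(x)\le u_i(y)+c(y,x)\le V(y)+c(y,x)$; taking $\sup_i$ on the far left gives $V(x)\le V(y)+c(y,x)$; now $\inf_y$ gives $V(x)\le\cT(V)(x)$. Item 3) is the exact dual: let $V=\inf_{i\in\cI}u_i$ with each $u_i\ge\cT(u_i)$; fix $x$, $\varepsilon>0$, choose $i_0$ with $u_{i_0}(x)\le V(x)+\varepsilon$; then $V(x)+\varepsilon\ge u_{i_0}(x)\ge\cT(u_{i_0})(x)\ge\cT(V)(x)$ by monotonicity and $V\le u_{i_0}$, and $\varepsilon\to 0$ gives $V\ge\cT(V)$.

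\textbf{Main obstacle.} There is essentially no hard step here: the only thing to be careful about is keeping the direction of monotonicity of $\cT$ straight and not confusing, for item 2), the sup-case with the inf-case — the sup of subsolutions needs the "for every $y$" estimate taken before passing to $\sup_i$, while the inf of subsolutions needs an $\varepsilon$-near-optimal index chosen first. The equiboundedness hypothesis must be invoked explicitly at the outset of each item to ensure the resulting function is finite-valued (hence $\cT$ of it is well defined and, by Proposition \ref{app prop cont}, continuous), but it plays no further role. Since the statement is cited as being "mostly contained in \cite[Lemma 2.32 and Proposition A.8.3]{Z}", I would in practice simply reference that and include the two-line pointwise arguments above for completeness.
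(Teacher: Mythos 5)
Your items 1) and 3), and the $\sup$ half of item 2), are correct, and item 3) coincides with the only part the paper actually proves (the one-line monotonicity argument $\cT(V)\le\inf_i\cT(u_i)\le\inf_i u_i=V$; the paper refers to \cite{Z} for items 1 and 2). But your argument for $v=\inf_{i\in\cI}u_i$ in item 2) is genuinely broken. In the displayed chain
$v(x)+\varepsilon\ge u_{i_0}(x)\ge\cT(u_{i_0})(x)\ge\cT(v)(x)$
the middle inequality $u_{i_0}(x)\ge\cT(u_{i_0})(x)$ is the \emph{supersolution} inequality, which you do not have: $u_{i_0}$ is a subsolution, so the inequality goes the other way. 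Moreover the conclusion you draw, $v\ge\cT(v)$, is the supersolution property, whereas the claim to be proved is $v\le\cT(v)$. You have in effect pasted the proof of item 3) into the wrong slot; after your (correct) observation that the naive monotonicity route gives an inequality in the useless direction, the argument never recovers.

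The repair is the same ``for every $y$'' device you used correctly for $V=\sup_i u_i$. For every $i$ and every $x,y\in M$, the subsolution inequality gives $u_i(x)\le\cT(u_i)(x)\le u_i(y)+c(y,x)$; since $v(x)\le u_i(x)$, this yields $v(x)\le u_i(y)+c(y,x)$ for all $i$ and $y$. Taking first $\inf_{i\in\cI}$ on the right gives $v(x)\le v(y)+c(y,x)$, and then $\inf_{y\in M}$ gives $v(x)\le\cT(v)(x)$. (Equivalently: every subsolution is characterized by the two-point inequalities $u(x)-u(y)\le c(y,x)$, and this family of constraints is stable under pointwise infima and suprema of equibounded families.) With this substitution the proposal is complete and, apart from supplying the omitted details of items 1) and 2), follows the same elementary route as the paper.
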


     \begin{proof}
     For completeness, let us explain the last point. It is a direct consequence of the monotonicity of $\cT$: 
     $$\cT V = \cT \inf_iu_i \leq \inf_i \cT u_i \leq \inf_i u_i = V.$$
     \end{proof}

   The next two propositions are contained in \cite[Theorem 2.32]{Z}, and \cite[Theorem 2.29]{Z}, respectively.  
   \begin{prop}\label{subpeierls} 
    If $u$ is a subsolution then we have   
    $$u(x)-u(y) \le h(y,x). $$
 \end{prop}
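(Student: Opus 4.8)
The statement to prove is Proposition~\ref{subpeierls}: if $u$ is a subsolution of \eqref{appendix eqHJ}, then $u(x)-u(y)\le h(y,x)$ for all $x,y\in M$.

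\medskip

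The plan is to unwind the subsolution inequality along finite sequences and then pass to the $\liminf$ defining the Peierls barrier. First I would observe that $u\le\cT(u)$ means precisely that $u(z)\le u(w)+c(w,z)$ for every $w\in M$, hence, fixing a sequence $\bar x=(x_{-n},x_{-n+1},\dots,x_{-1},x_0)\in S_n(y,x)$ with $x_{-n}=y$ and $x_0=x$, one gets by a one-step estimate
$$u(x_{-i+1})\le u(x_{-i})+c(x_{-i},x_{-i+1})\qquad\text{for }i=1,\dots,n.$$
Summing these $n$ inequalities telescopes the left-hand side and yields
$$u(x)-u(y)=u(x_0)-u(x_{-n})\le\sum_{i=1}^{n}c(x_{-i},x_{-i+1})=c(\bar x).$$
Since the right-hand side depends on the chosen sequence, taking the infimum over all $\bar x\in S_n(y,x)$ gives $u(x)-u(y)\le c_n(y,x)=h_n(y,x)$ (recall $\alpha=0$, so $h_n=c_n$). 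This holds for every $n\ge 1$, so passing to the $\liminf$ as $n\to\infty$ gives $u(x)-u(y)\le\liminf_n h_n(y,x)=h(y,x)$, which is the claim.

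\medskip

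The only point requiring a little care is that $u$ is allowed to be discontinuous and merely bounded from below; but this causes no trouble here, since the argument is purely pointwise and finitary (no integration or limiting of $u$ itself is involved), and boundedness from below is exactly what guarantees $\cT(u)(x)>-\infty$ so that the subsolution inequality makes sense. One should also note that $S_n(y,x)$ is nonempty for every $n$ (one may simply use constant or arbitrary intermediate points), so the infimum defining $c_n$ is over a nonempty set, and $h(y,x)$ is finite by the remark in the appendix that $h=h^\alpha$ is finite-valued at the critical value. Thus there is no genuine obstacle; the proof is essentially the telescoping computation above followed by the two successive infimum/$\liminf$ passages.
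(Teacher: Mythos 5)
Your proof is correct: the telescoping of the one-step subsolution inequality $u(x_{-i+1})\le u(x_{-i})+c(x_{-i},x_{-i+1})$ along a sequence in $S_n(y,x)$, followed by the infimum over sequences and the $\liminf$ in $n$, is exactly the standard argument, and the pointwise nature of the estimate indeed makes the possible discontinuity of $u$ irrelevant. The paper itself gives no proof here (it only cites \cite[Theorem 2.29]{Z}), and your argument is the expected one.
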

      This is \cite[Theorem 2.29]{Z}.  
       \begin{prop}
 \label{solpeierls} 
 Let $y\in M$. 
\begin{itemize}
\item[1)] The function $ h(y, \cdot ) $ is a solution \eqref{appendix eqHJ}.   
\item[2)] The function $ -h(\cdot,y) $ is a  continuous subsolution  of \eqref{appendix eqHJ}.
\item[3)] The family of functions $\big( -h(\cdot,y)\big)_{y\in M} $ is equi-bounded. 
\end{itemize}
\end{prop}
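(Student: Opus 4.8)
The three assertions are classical in discrete Aubry--Mather theory (they are exactly what \cite[Theorem 2.32]{Z} provides), but let me indicate a self-contained route. Throughout I use the normalization $\alpha=0$, so that $h=\liminf_n c_n$, together with the subadditivity \eqref{triangle} and the elementary remark that each $c_n$ has the same modulus of continuity $\omega$ as $c$ in each of its two arguments (changing the first or last point of a sequence only perturbs one term $c(\cdot,\cdot)$).

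For (1) I would prove the two halves separately. The inequality $h(y,\cdot)\le\cT(h(y,\cdot))$ is immediate from \eqref{triangle}: for every $z\in M$ one has $h_{n+1}(y,x)\le h_n(y,z)+h_1(z,x)=h_n(y,z)+c(z,x)$, and passing to the $\liminf$ in $n$ (using $\liminf_n h_{n+1}=\liminf_n h_n$) gives $h(y,x)\le h(y,z)+c(z,x)$; now take the infimum over $z$. For the reverse inequality $h(y,x)\ge\cT(h(y,\cdot))(x)$, pick $n_k\to\infty$ with $c_{n_k}(y,x)\to h(y,x)$, for each $k$ a minimizing sequence in $S_{n_k}(y,x)$, and let $z_k$ be its penultimate point, so that $c_{n_k}(y,x)\ge c_{n_k-1}(y,z_k)+c(z_k,x)$. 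By compactness extract $z_k\to z_\infty$; using that $c_{n-1}(y,\cdot)$ has modulus of continuity $\omega$ for every $n$, that $c(z_k,x)\to c(z_\infty,x)$, and that $\liminf_k c_{n_k-1}(y,z_\infty)\ge\liminf_n c_n(y,z_\infty)=h(y,z_\infty)$, I pass to the $\liminf$ and obtain $h(y,x)\ge h(y,z_\infty)+c(z_\infty,x)\ge\cT(h(y,\cdot))(x)$. Hence $h(y,\cdot)$ is a fixed point of $\cT$; since $\alpha$ is the critical value, $h(y,\cdot)$ is finite-valued, so Proposition \ref{app prop cont} gives that $h(y,\cdot)=\cT(h(y,\cdot))$ is continuous, with the modulus of continuity of $c$.

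For (2) it suffices to check $-h(x,y)\le -h(z,y)+c(z,x)$ for all $x,z$, i.e. $h(z,y)\le c(z,x)+h(x,y)$; this is again \eqref{triangle}, since $h_{1+n}(z,y)\le h_1(z,x)+h_n(x,y)=c(z,x)+h_n(x,y)$ and we take the $\liminf$ in $n$. Continuity of $x\mapsto h(x,y)$ follows because each $c_n(\cdot,y)$ has modulus of continuity $\omega$, hence so does the $\liminf$ $h(\cdot,y)$, provided it is finite-valued — which holds since $\alpha$ is critical (one checks directly that the $\liminf$ of an equi-$\omega$-continuous, finite-valued family of functions is again $\omega$-continuous). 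For (3), the same equicontinuity remark applies jointly on $M\times M$: $c_n$ is equicontinuous there with modulus $\omega$, hence so is the finite-valued function $h=\liminf_n c_n$; being continuous on the compact space $M\times M$, $h$ is bounded, so the family $(-h(\cdot,y))_{y\in M}$ is equibounded. (Alternatively, the lower bound $h\ge -2\|u\|_0$ for any solution $u$ of the critical equation follows from Proposition \ref{subpeierls} applied to $u$, while the upper bound comes from the continuity of $h$ on the compact $M\times M$.)

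The only step requiring genuine care is the supersolution inequality in (1): one must combine the compactness of $M$ with the uniform (in $n$) $\omega$-continuity of $c_{n-1}(y,\cdot)$ and the easy-to-misuse fact that the $\liminf$ along a subsequence dominates the $\liminf$ along the full sequence. Everything else is a formal consequence of \eqref{triangle} and the equicontinuity of the $c_n$.
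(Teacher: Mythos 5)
Your proof is correct. Note, however, that the paper itself does not prove this proposition: it simply cites \cite[Theorem 2.29]{Z} (resp.\ \cite[Theorem 2.32]{Z}), so your argument is not an alternative route but a self-contained replacement for a statement the authors import from the literature. The replacement is sound. The two formal halves (the subsolution inequalities in (1) and (2)) are, as you say, immediate from \eqref{triangle} after observing $h_1=c_1=c$ under the normalization $\alpha=0$ and that $\liminf_n a_{n+1}=\liminf_n a_n$. The delicate step is indeed the supersolution inequality in (1), and you handle its two traps correctly: the equi-$\omega$-continuity of $c_{n_k-1}(y,\cdot)$ lets you replace the moving penultimate point $z_k$ by its limit $z_\infty$ at cost $\omega(d(z_k,z_\infty))\to 0$, and the inequality $\liminf_k c_{n_k-1}(y,z_\infty)\ge\liminf_n c_n(y,z_\infty)=h(y,z_\infty)$ goes in the right direction precisely because a $\liminf$ along a subsequence dominates the full $\liminf$. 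Two points you use and should keep explicit as imported facts rather than things you prove: the existence of minimizers in $S_{n}(y,x)$ (compactness of $M^{n-1}$ plus continuity of the cost, so fine), and the finiteness of $h$ at the critical value, which the paper itself only asserts with a reference to \cite{Z}; your continuity and equiboundedness claims in (2) and (3) (a $\liminf$ of an equi-$\omega$-continuous family is $\omega$-continuous, hence bounded on the compact $M\times M$) all hinge on that finiteness, so the argument is complete modulo that one external input. The redundant alternative in (3) via Proposition \ref{subpeierls} is harmless but only supplies the lower bound on $h$; the joint-continuity argument already gives both bounds.
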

 The following proposition is from \cite[Theorem 13]{BB}.  We provide here a different proof using subsolutions.
 \begin{prop}
\label{supportcontained}
The support of a projected Mather measure is contained in the projected Aubry set.

  \end{prop}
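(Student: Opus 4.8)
The plan is to exploit the characterization of Mather measures as minimizers of $\int c\,d\tilde\nu$ over closed measures, together with the subsolution inequality from Proposition \ref{subpeierls} (or rather its infinitesimal analogue $u(x)-u(y)\le c(y,x)$ when $u$ is a subsolution of \eqref{appendix eqHJ}). First I would pick a projected Mather measure $\mu$, coming from a Mather measure $\tilde\mu$ on $M\times M$, and I want to show $\sop(\mu)\subseteq\cA$. The idea is that a point in the support of $\mu$ must be ``recurrent at zero cost'' in a sense that forces $h(y,y)=0$.

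Here is the key mechanism I would use. Fix a continuous subsolution $u$ of \eqref{appendix eqHJ}; then $u(x)-u(y)\le c(y,x)$ pointwise, hence
\[
\int_{M\times M}\big(u(x)-u(y)\big)\,d\tilde\mu(x,y)\le \int_{M\times M}c(y,x)\,d\tilde\mu(x,y).
\]
Wait — one must be careful about which projection is which; I would set up $\tilde\mu$ so that the cost integrated is $\int c(x,y)\,d\tilde\mu(x,y)=0$ (after normalizing $\alpha=0$, which the appendix allows), and the closedness gives $\int(u(x)-u(y))\,d\tilde\mu=0$. Combining, the pointwise nonnegative function $c(x,y)-\big(u(y)-u(x)\big)$ (nonnegative precisely because $u$ is a subsolution) has zero integral against $\tilde\mu$, so it vanishes $\tilde\mu$-a.e., i.e. $\tilde\mu$ is supported on the set where the subsolution inequality is an equality. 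This holds simultaneously for a suitable calibrated subsolution: I would take $u(\cdot)=h(z,\cdot)$ for a well-chosen $z$, or better, run the argument to show that for $\tilde\mu$-a.e.\ $(x,y)$ the orbit segment $y\to x$ is calibrated.

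The next step is to promote this ``calibrated a.e.'' statement to the statement $h(y,y)=0$ for $y\in\sop(\mu)$. The clean way: using closedness of $\tilde\mu$ one can, by an averaging/recurrence argument (Poincaré recurrence applied to the ``shift'' naturally associated with $\tilde\mu$, or more elementarily by iterating the closedness relation $n$ times and using $c_n(x,y)+$ boundary terms), show that for a point $y_0\in\sop(\mu)$ and any $\ep>0$ there exist $n$ and a closed chain from $y_0$ back to within $\ep$ of $y_0$ of total cost at most $\ep$; continuity of $c$ then yields $h(y_0,y_0)\le 0$, and since the symmetrized barrier is nonnegative one concludes $h(y_0,y_0)=0$, i.e.\ $y_0\in\cA$. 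Alternatively, and perhaps most in the spirit of the paper's ``different proof using subsolutions'': observe that $x\mapsto h(x,y_0)$ and $x\mapsto -h(x,y_0)$ bracket things, and evaluating the equality case of the subsolution inequality for $u=-h(\cdot,y_0)$ along $\tilde\mu$-typical orbits through $y_0$ forces $h(y_0,y_0)=0$.

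The main obstacle I anticipate is the passage from ``$\tilde\mu$ concentrates on calibrated pairs'' to ``$\sop(\mu)\subseteq\cA$'': the equality-a.e.\ statement is only almost everywhere, whereas we want it on the whole (closed) support, and we need to produce genuine closed loops of near-zero cost through an arbitrary support point, not merely through a.e.\ point. Closedness of $\tilde\mu$ is exactly the tool that lets one ``reconnect'' orbit segments, but making the reconnection quantitative (cost $<\ep$, endpoints $\ep$-close) while staying in $\sop(\mu)$ requires a careful recurrence argument — this is where I would spend the real effort, likely invoking that $\sop(\tilde\mu)$ is invariant in the appropriate sense and applying an approximate Poincaré recurrence. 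Everything else (the integration-by-parts against closed measures, the nonnegativity from the subsolution property, the use of Proposition \ref{subpeierls} and the $C^0$ continuity of $c$) is routine.
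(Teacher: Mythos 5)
Your first half is exactly the paper's argument: integrate the pointwise inequality $u(y)-u(x)\le c(x,y)$ against $\tilde\mu$, use closedness and the Mather property to make both sides zero, conclude that the continuous nonnegative function $c(x,y)-u(y)+u(x)$ vanishes $\tilde\mu$-a.e.\ and hence on all of $\sop(\tilde\mu)$. That part is correct and complete.

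The gap is in the second half, which is where the actual content lies. Your primary route --- build a shift or Markov dynamics ``naturally associated'' with $\tilde\mu$ and apply approximate Poincar\'e recurrence to produce closed chains of near-zero cost through every support point --- is left entirely unexecuted, and it is not routine: a closed measure on $M\times M$ carries no canonical dynamics, so you would first have to disintegrate $\tilde\mu$ into a transition kernel with stationary law $\mu$, run recurrence for the resulting Markov shift, and even then you only control returns near $\mu$-a.e.\ point, with a further upgrade needed to reach every point of the closed support together with the cost estimate. You correctly identify this as ``where I would spend the real effort,'' but that effort is the proof. The paper sidesteps all of it with one deterministic observation that your sketch never makes: from equality on the support one gets, for every continuous subsolution $u$ and every $(x,y)\in\sop(\tilde\mu)$, that $u(y)=u(x)+c(x,y)\ge \cT(u)(y)$, hence $\cT(u)(y)=u(y)$; applying the same equality to the continuous subsolution $\cT(u)$ gives $\cT(u)(x)=u(x)$ as well. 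So at any point $z$ of the projected support \emph{every} continuous subsolution is a fixed point of $\cT$ at $z$, hence $\cT^n(u)(z)=u(z)$ for all $n$. Taking the specific subsolution $u=-h(\cdot,z)$, writing $\cT^n(u)(z)=u(z_n)+c_n(z_n,z)$ for minimizing points $z_n$ and extracting $z_{n_k}\to\xi$, one obtains
\[
-h(z,z)=u(z)\ \ge\ u(\xi)+h(\xi,z)\ =\ -h(\xi,z)+h(\xi,z)\ =\ 0,
\]
whence $h(z,z)=0$ and $z\in\cA$. No recurrence and no orbits of $\tilde\mu$ enter at this stage. Your ``alternative'' sentence does name the right subsolution $-h(\cdot,y_0)$, but the mechanism you attach to it (evaluation ``along $\tilde\mu$-typical orbits'') is again measure-theoretic and vague; the step that closes the proof is the purely pointwise iteration of $\cT$ above, and it is missing from your proposal.
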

  
  \begin{proof}
  Let $(x,y) $ be in the support of a Mather measure $\tilde \mu $. Then we have 
  \begin{equation}
  \label{sub=c}
  u(y)-u(x)=c(x,y).
  \end{equation}
  for all { \it continuous subsolutions.} Indeed, since $u$ is a subsolution,  if we integrate with respect to $\tilde \mu$ we obtain 
  $$\int_{M\times M}\big( u(y)-u(x)\big)\, d\tilde \mu(x,y) \le \int_{M\times M} c(x,y)\, d\tilde \mu(x,y). $$
  But both sides are equal to zero, the left hand side because the measure is closed and the right hand side because it is a Mather measure.  So the inequality
  $u(y)-u(x) \le c(x,y) $ is an equality $\tilde \mu$-almost everywhere, and since they are continuous functions the equality is everywhere   on the support of $\tilde \mu$.
   So $u(y)=u(x)+c(x,y) \ge \cT( u)(y)$, but  $u$ being a subsolution we conclude that $\cT(u)(y) =u(y) $. Applying \eqref{sub=c} to $\cT (u)$ we obtain 
  \begin{equation}
  \label{sub=c2}
  \cT(u)(y)-\cT(u)(x)=c(x,y),
  \end{equation}
  yielding that also in $x$ we have $\cT(u)(x) =u(x)$.  
 
 We have thus proven that, for a given Mather measure $\tilde\mu$ and for every continuous subsolution $u$ of \eqref{appendix eqHJ}, 
 \[
 \cT(u)(x)=u(x)\quad\hbox{and}\quad \cT(u)(y)=u(y)\quad\hbox{for every $(x,y)\in\D{supp}(\tilde\mu)$.}
 \] 

We will prove that this implies that the points $x$ and $y$ are in the projected Aubry set $\cA$, that is $h(x,x)=h(y,y)=0$. 
Indeed, suppose that  $\cT(u)(z) =u(z) $ for all continuous subsolution $u$ of \eqref{appendix eqHJ}.
 By using   $u(\cdot )=-h(\cdot, z)$ and   by an immediate induction we infer that  for every positive integer $n$
 $$ u(z) = \cT^n(u)(z) = \inf_{z'\in M} u(z') + c_n(z',z) = u(z_n)+c_n(z_n,z), $$
 where $z_n$ is a suitable point that exists by compactness of $M$ and continuity of $u$ and $c_n$. Up to extracting a subsequence, $(n_k)_k$, we may assume that $z_{n_k}$ converges to some $\xi\in M$ and then get
 $$u(z) = \liminf_k u(z_{n_k}) + c_{n_k}(z_{n_k},z) \geq u(\xi)+h(\xi,z).$$
 We just proved that $-h(z,z) \geq -h(\xi,z)+h(\xi,z)=0$. The other inequality being always true, we get $h(z,z)=0$ and $z\in\cA$.
 \end{proof}

As a corollary of Proposition \ref{continuitysol} and  Proposition \ref{supportcontained} we obtain

\begin{cor}\label{integrability}
Any subsolution of \eqref{appendix eqHJ} is integrable with respect to any projected Mather measure.
\end{cor}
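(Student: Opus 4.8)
The statement to prove is Corollary \ref{integrability}: any subsolution of \eqref{appendix eqHJ} is integrable with respect to any projected Mather measure. The plan is to reduce the integrability of a possibly discontinuous subsolution to that of a continuous one, which is automatic since continuous functions on the compact space $M$ are bounded, hence integrable against any probability measure.

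First I would recall that, by Proposition \ref{continuitysol}, if $u$ is a subsolution of \eqref{appendix eqHJ} then $u$ is bounded from below (indeed, $u\le\cT(u)$ and $\cT(u)$ is finite-valued continuous on the compact $M$), and moreover $\cT(u)$ is a \emph{continuous} subsolution satisfying $\cT(u)\ge u$ on $M$ with equality on the projected Aubry set $\cA$. So $u$ is sandwiched: $\cT(u)\ge u$ everywhere, and both functions agree on $\cA$. Then I would invoke Proposition \ref{supportcontained}, which says that the support of any Mather measure on $M\times M$ lies in $\cA\times\cA$; projecting via $\pi_1$ (or $\pi_2$), the support of any projected Mather measure $\mu$ is contained in $\cA$. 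Since $u=\cT(u)$ on $\cA\supseteq\operatorname{supp}(\mu)$, the measurable function $u$ coincides $\mu$-almost everywhere with the continuous function $\cT(u)$.

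A continuous function on the compact metric space $M$ is bounded, hence $\cT(u)$ is $\mu$-integrable for every Borel probability measure $\mu$, in particular for every projected Mather measure. Since $u=\cT(u)$ $\mu$-a.e., $u$ is $\mu$-integrable as well (one should note $u$ is Borel measurable, or at least $\mu$-measurable, which is why the corollary speaks of integrability with respect to $\mu$ rather than absolute integrability; the equality $\mu$-a.e.\ with a continuous function suffices to conclude $\int_M u\,d\mu=\int_M \cT(u)\,d\mu$ is a well-defined finite number). This completes the proof.

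I do not expect a serious obstacle here, since both ingredients—continuity of $u$ on $\cA$ and containment of $\operatorname{supp}(\mu)$ in $\cA$—are already in hand from Propositions \ref{continuitysol} and \ref{supportcontained}. The only mild point of care is the measurability of a general (discontinuous) subsolution $u$: one should either restrict to Borel-measurable subsolutions, or simply observe that since $u$ agrees $\mu$-a.e.\ with the Borel (indeed continuous) function $\cT(u)$, the value $\int_M u\,d\mu := \int_M \cT(u)\,d\mu$ is unambiguous and finite, which is all that is used elsewhere in the paper (e.g.\ in the definition of $\cF_-$ and in Proposition \ref{keycor}).
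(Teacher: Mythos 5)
Your proposal is correct and is essentially the paper's own argument: the corollary is stated there as an immediate consequence of Proposition \ref{continuitysol} (the subsolution agrees on $\cA$ with the continuous function $\cT(u)$) and Proposition \ref{supportcontained} (projected Mather measures are supported in $\cA$), which is exactly your reduction. Your closing remark about measurability matches the paper's own comment that the main point is that measurability of the subsolution is not required a priori.
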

  
  We remark that the main point is that the measurability of the subsolution is not  a priori required.

  \subsection{Maximum Principle}

 \begin{prop}\label{Maxprinciple}
 If $v$ is  a subsolution and $w$  is a continuous   supersolution of \eqref{appendix eqHJ} such that $v\le w$ on  the projected Aubry set $\mathcal A$, then $v \le w$ on $M$. 
 \end{prop}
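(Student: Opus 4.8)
The plan is to prove the Maximum Principle (Proposition \ref{Maxprinciple}) by the standard ``propagate information along minimizing sequences back to the Aubry set'' technique, combined with the Peierls barrier estimates already available. Let $v$ be a subsolution and $w$ a continuous supersolution, with $v\le w$ on $\cA$. We want $v\le w$ on all of $M$. The first reduction is to replace $v$ by $\cT(v)$: by Proposition \ref{continuitysol}, $\cT(v)$ is a continuous subsolution with $\cT(v)\ge v$ everywhere and $\cT(v)=v$ on $\cA$, so if we prove $\cT(v)\le w$ on $M$ we get $v\le w$ on $M$; thus we may assume $v$ is continuous. (One should first check that $v$ is bounded below so that Proposition \ref{continuitysol} applies; this is immediate since $v\le w$ on $\cA$ is not enough, but a subsolution satisfies $v(x)-v(y)\le h(y,x)$ by Proposition \ref{subpeierls}, and once we know $v$ is bounded — which follows because, fixing any $y_0$, $v(x)\le v(y_0)+h(y_0,x)$ and $h(y_0,\cdot)$ is bounded, while $v$ bounded below needs a separate tiny argument; alternatively just invoke that subsolutions here are taken continuous in the relevant applications, or note $v \ge -h(\cdot, y_0) + v(y_0) - $ const is false in general, so the cleanest route is: if $v$ is not bounded below then $\cT(v)\equiv -\infty$ is excluded by $v\le \cT(v)$ finite... actually $v\le\cT(v)$ forces $\cT(v)$ finite hence $v$ bounded below by the remark after Proposition \ref{app prop cont}.)

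Now assume $v$ continuous. Fix $x\in M$. Since $v$ is a subsolution, $v\le \cT(v)$, and iterating as in the proof of Proposition \ref{prop comparison} (with $\lambda=1$, $\beta=0$), for every $n$ there is a sequence $(x_0=x, x_{-1},\dots,x_{-n})$ with
\[
v(x)\le v(x_{-n}) + \sum_{k=1}^{n} c(x_{-k},x_{-k+1}) = v(x_{-n}) + c_n(x_{-n},x) \le v(x_{-n}) + h_n(x_{-n},x),
\]
using that the right-hand side is, up to the choice of the optimal sequence, $c_n(x_{-n},x)$. Taking infimum over the endpoint $x_{-n}$ and passing to a subsequence $n_k\to\infty$ along which the (compactness-produced) minimizers $x_{-n_k}$ converge to some $\xi\in M$, continuity of $v$ and the definition $h(\xi,x)=\liminf h_n(\xi,x)$ give
\[
v(x)\le v(\xi) + h(\xi,x).
\]
The key remaining point is that $\xi\in\cA$. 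This is exactly the argument appearing inside the proof of Proposition \ref{supportcontained}: applying the above construction to the subsolution $u(\cdot)=-h(\cdot,x)$ (continuous by Proposition \ref{solpeierls}(2)) forces $\cT^n u(x)=u(x)$ for all $n$ since $-h(\cdot,x)$ is in fact a solution (Proposition \ref{solpeierls}(1)), and then the liminf estimate yields $-h(x,x)\ge -h(\xi,x)+h(\xi,x)=0$, i.e. $h(x,x)=0$ — wait, that shows $x\in\cA$, not $\xi$. I should instead argue directly: for a general continuous subsolution $v$, I want $\xi\in\cA$; but that need not hold. The correct move is to iterate the estimate: apply $v(x)\le v(\xi_1)+h(\xi_1,x)$, then $v(\xi_1)\le v(\xi_2)+h(\xi_2,\xi_1)$, etc., producing by the triangle inequality \eqref{triangle} for $h$ a chain, and use a diagonal/compactness argument together with the subadditivity to land on a point of $\cA$. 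The cleanest formulation: there exists $\xi\in\cA$ with $v(x)\le v(\xi)+h(\xi,x)$, which is precisely the content one extracts from the proof of Proposition \ref{supportcontained} applied with the roles adapted — this is the step I expect to be the main obstacle, and I would handle it by reproducing that compactness-plus-liminf argument carefully, using that $h$ is finite-valued, satisfies the triangle inequality, and $h(\xi,\xi)=0$ characterizes $\cA$.

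Once we have $\xi\in\cA$ with $v(x)\le v(\xi)+h(\xi,x)$, the proof concludes quickly. Since $w$ is a continuous supersolution, $w\ge\cT(w)$, and iterating in the same way together with the definition of $h$ gives $w(\xi)+h(\xi,x)\le w(x)$ — indeed supersolutions satisfy $w(x)\ge \cT^n(w)(x)=\inf_{z}(w(z)+c_n(z,x))$, hence $w(x)\ge w(z)+c_n(z,x)$ for all $z$; taking $z=\xi$ and passing to the liminf in $n$ gives $w(x)\ge w(\xi)+h(\xi,x)$. Combining,
\[
v(x)\le v(\xi)+h(\xi,x)\le w(\xi)+h(\xi,x)\le w(x),
\]
where the middle inequality is the hypothesis $v\le w$ on $\cA$ applied at $\xi\in\cA$. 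Since $x\in M$ was arbitrary, $v\le w$ on $M$, as desired. The only delicate points are the boundedness-below reduction (handled via the remark after Proposition \ref{app prop cont}) and the existence of the Aubry point $\xi$ controlling $x$ (handled as in Proposition \ref{supportcontained}); everything else is the routine ``iterate the (sub/super)solution inequality and pass to the limit'' computation.
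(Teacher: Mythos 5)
There is a genuine gap, and you half-identify it yourself mid-proof. Your strategy builds the backward chain from the \emph{subsolution} side (minimizers of $z\mapsto v(z)+c_n(z,x)$) and then needs two things that you do not establish: (i) that the accumulation point $\xi$ of those minimizers lies in $\cA$ --- you correctly observe ``that need not hold'' for a general subsolution, and then leave the fix as ``reproduce that compactness-plus-liminf argument carefully,'' which is not a proof; and (ii) the inequality $w(x)\ge w(\xi)+h(\xi,x)$, which you derive from ``$w(x)\ge \cT^n(w)(x)=\inf_z\big(w(z)+c_n(z,x)\big)$, hence $w(x)\ge w(z)+c_n(z,x)$ for all $z$.'' That last implication is false: a bound by an infimum gives the inequality for \emph{some} near-minimizing $z$, not for all $z$. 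A supersolution satisfies $v(x)-v(z)\le h(z,x)$ for all $z$ only if it is a subsolution; for $w$ you only get the existence of a specific backward point.

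The two problems have a common cure, which is exactly what the paper does: run the iteration on the \emph{supersolution}. Since $w$ is continuous and $w\ge\cT(w)$, the infimum is attained, so one constructs $x_{-1},x_{-2},\dots$ with $w(x_{-i-1})+c(x_{-i-1},x_{-i})\le w(x_{-i})$. Along this $w$-calibrated chain the subsolution inequality for $u=\cT(v)$ holds automatically (it holds along \emph{every} chain), giving $w(x)-u(x)\ge w(x_{-n})-u(x_{-n})$; and the telescoping sum bounds $h_{M_k}(x_{-n_k-M_k},x_{-n_k})$ by $w(x_{-n_k})-w(x_{-n_k-M_k})$, so choosing two diverging subsequences converging to the same point $z$ yields $h(z,z)\le 0$, i.e.\ $z\in\cA$, and the conclusion follows from $v\le w$ on $\cA$. (Equivalently, once you have such a $z\in\cA$ with $w(x)\ge w(z)+h(z,x)$, your final chain closes using Proposition \ref{subpeierls} for $v$.) Your reduction to a continuous subsolution via $\cT(v)$ and Proposition \ref{continuitysol} is correct and matches the paper; the defect is entirely in which of the two functions generates the minimizing sequence and in the quantifier error on the supersolution inequality.
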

  
\begin{proof}
Let us set $u:=\cT(v)$. Then $u$ is still a subsolution of \eqref{appendix eqHJ}  such that $u=v$ on $\mathcal A$, see Proposition \ref{continuitysol}. 
Since $v\leq u$ on $M$, it suffices to prove the statement with $u$ in place of $v$. The advantage is that $u$ is always a continuous function, while the subsolution $v$ we started with may be discontinuous a priori.  

Let us proceed to prove the statement with $u$ in place of $v$. Let $x$ be an arbitrary point in $M$.  Since $w$ is a supersolution, we can find a point $x_{-1}$ such that
  $$ w(x_{-1}) + c(x_{-1}, x )\le  w(x).$$
Arguing inductively, we construct a sequence $ \bar x $  in $S_\infty(x)$ such that 
 $$ w(x_{-i-1}) + c(x_{-i-1},x_{-i})  \le w(x_{-i})\quad\hbox{for every $i\in\N$}  $$
 and  
 $$ w(x_{-n}) +  c (x_{-n}, x_{-n+1})+...+c(x_{-1},x)  \le w(x). $$
 On the other hand, since $u$ is  a subsolution, we have  
  \begin{eqnarray*}
   u(x) & \le & u (x_{-1}) + c(x_{-1},x) \\
   u(x_{-1})  & \le & u (x_{-2}) + c(x_{-2},x_{-1}) \\
    & \vdots &   \\
    u(x_{-n+1}) & \le &u(x_{-n}) + c(x_{-n}, x_{-n+1}) 
   \end{eqnarray*}
  for every $n\in \N$, so 
    $ u(x) \le u(x_{-n}) + c (x_{-n}, x_{-n+1})+...+c(x_{-1},x)$.
  Therefore  $$ w(x) -u(x) \ge w(x_{-n})- u(x_{-n}).$$
 We claim that any accumulation point of $\bar x$ belongs to the projected Aubry set.  This is enough to conclude. Indeed,  let $\left(x_{-n_k}\right)_k$ be an appropriate subsequence converging to a point $z\in\mathcal A$. Then  
$$ 
w(x) -u(x) \ge \lim_{k\to +\infty} w(x_{-n_k})- u(x_{-n_k})=  w(z)- u(z) \ge 0.
$$
Let us then prove the claim.
Let $z$ be an accumulation point of $\bar x$. Then we can find    two diverging sequences $(n_k)_k$ and $(M_k)_k$ such that the points $x_{-n_k-M_k}$ and $x_{-n_k}$ 
are converging to $z$ as $k\to +\infty$. We have 
     \begin{eqnarray*}
   &h_{M_k}&\!\!\!\!\! (x_{-n_k-M_k}, x_{-n_k}) \le  c(x_{-n_k-M_k}, x_{-n_k-M_k+1})+ ....+c(x_{-n_k+1},x_{-n_k}) \\
    & \le &  -w(x_{-n_k-M_k}) + w(x_{-n_k-M_k+1})+... -w(x_{-n_k+1})+ w(x_{-n_k})   \\
 &=&    -w(x_{-n_k-M_k}) + w(x_{-n_k}),
  \end{eqnarray*}
  so sending $k\to +\infty$ we infer that $h(z,z)=0$, i.e. $z$ belongs to the projected Aubry set $\mathcal A$, as it was claimed.  
\end{proof}
\end{appendix}

\bibliography{discount}
\bibliographystyle{siam}

 \end{document}